\documentclass{amsart}
\usepackage{epsfig}
\usepackage{graphicx}
\usepackage{amscd}
\usepackage{amsmath}
\usepackage{amsxtra}
\usepackage{amsfonts}
\usepackage{amssymb}

\oddsidemargin  0.0in
	\evensidemargin 0.0in
	\textwidth      6.5in
	\headheight     0.0in
	\topmargin      0.0in
	\textheight=9.0in 
 
\newtheorem{theorem}{Theorem}[section]
\newtheorem{corollary}[theorem]{Corollary}
\newtheorem{lemma}[theorem]{Lemma}
\newtheorem{proposition}[theorem]{Proposition}

\theoremstyle{definition}
\newtheorem{definition}[theorem]{Definition}
\newtheorem{remark}[theorem]{Remark}

\newtheorem{example}[theorem]{Example}
\theoremstyle{remark}

\renewcommand{\theclaim}{\textup{\theclaim}}

\newtheorem*{acknowledgements}{Acknowledgements}

\numberwithin{equation}{section}

\def\openone%{\hbox{\upshape \small1\kern-3.3pt\normalsize1}}

{\mathchoice

{\hbox{\upshape \small1\kern-3.3pt\normalsize1}}

{\hbox{\upshape \small1\kern-3.3pt\normalsize1}}

{\hbox{\upshape \tiny1\kern-2.3pt\SMALL1}}

{\hbox{\upshape \Tiny1\kern-2pt\tiny1}}}

\makeatletter

\newbox\ipbox

\newcommand{\ip}[2]{\left\langle #1\, , \,#2\right\rangle}
\newcommand{\diracb}[1]{\left\langle #1\mathrel{\mathchoice

{\setbox\ipbox=\hbox{$\displaystyle \left\langle\mathstrut
#1\right.$}

\vrule height\ht\ipbox width0.25pt depth\dp\ipbox}

{\setbox\ipbox=\hbox{$\textstyle \left\langle\mathstrut
#1\right.$}

\vrule height\ht\ipbox width0.25pt depth\dp\ipbox}

{\setbox\ipbox=\hbox{$\scriptstyle \left\langle\mathstrut
#1\right.$}

\vrule height\ht\ipbox width0.25pt depth\dp\ipbox}

{\setbox\ipbox=\hbox{$\scriptscriptstyle \left\langle\mathstrut
#1\right.$}

\vrule height\ht\ipbox width0.25pt depth\dp\ipbox}

}\right. }

\newcommand{\dirack}[1]{\left. \mathrel{\mathchoice

{\setbox\ipbox=\hbox{$\displaystyle \left.\mathstrut
#1\right\rangle$}

\vrule height\ht\ipbox width0.25pt depth\dp\ipbox}

{\setbox\ipbox=\hbox{$\textstyle \left.\mathstrut
#1\right\rangle$}

\vrule height\ht\ipbox width0.25pt depth\dp\ipbox}

{\setbox\ipbox=\hbox{$\scriptstyle \left.\mathstrut
#1\right\rangle$}

\vrule height\ht\ipbox width0.25pt depth\dp\ipbox}

{\setbox\ipbox=\hbox{$\scriptscriptstyle \left.\mathstrut
#1\right\rangle$}

\vrule height\ht\ipbox width0.25pt depth\dp\ipbox}

} #1\right\rangle}

\newcommand{\bz}{\mathbb{Z}}

\newcommand{\br}{\mathbb{R}}

\def\blfootnote{\xdef\@thefnmark{}\@footnotetext}
\newcommand{\muf}{\widehat\mu_4}
%\input hooklonguparrow
%\input cyracc.def
%\font\eightcyr=wncyr8
%\def\cyr{\eightcyr\cyracc}
\newcommand{\ty}{\emptyset}

\renewcommand{\mod}{\operatorname{mod}}

\hyphenation{wave-lets}\hyphenation{ in-fi-nite}\hyphenation{ con-vo-lu-tion}

\input xy
\xyoption{all}
\usepackage{amssymb}

%    Absolute value notation

\newcommand{\lng}{\textup{lng}}

\def\-{^{-1}}

%\def\A{\mathcal{A}}

%\def\F{\mathbb{F}}

%\def\V{\mathcal{V}}

%\def\U{\mathcal{U}}
%\def\fock{\Gamma(\H)}
%\def\pB{\varphi_B}

%\def\i{\iota_\in I}

%\def\r{\rightarrow}
%\def\L{\mathcal{L}}
%\def\tA{\tilde{A}}
%\def\tB{\tilde{B}}
%\def\tC{\tilde{C}}
%\def\tipi{\tilde{\pi}}

%    Blank box placeholder for figures (to avoid requiring any
%    particular graphics capabilities for printing this document).

\begin{document}

\title[On the spectra of a Cantor measure]{On the spectra of a Cantor measure}
\author{Dorin Ervin Dutkay}
\blfootnote{Research supported in part by a grant from the National Science Foundation DMS-0704191.}
\address{[Dorin Ervin Dutkay] University of Central Florida\\
	Department of Mathematics\\
	4000 Central Florida Blvd.\\
	P.O. Box 161364\\
	Orlando, FL 32816-1364\\
U.S.A.\\} \email{ddutkay@mail.ucf.edu}
\author{Deguang Han}
\address{[Deguang Han] University of Central Florida\\
	Department of Mathematics\\
	4000 Central Florida Blvd.\\
	P.O. Box 161364\\
	Orlando, FL 32816-1364\\
U.S.A.\\}
\email{dhan@pegasus.cc.ucf.edu}
\author{Qiyu Sun}
\address{[Qiyu Sun] University of Central Florida\\
	Department of Mathematics\\
	4000 Central Florida Blvd.\\
	P.O. Box 161364\\
	Orlando, FL 32816-1364\\
U.S.A.\\}
\email{qsun@mail.ucf.edu}
\thanks{} 
\subjclass[2000]{28A80, 42B05, 60G42, 46C99, 37B25, 47A10}
\keywords{Fourier series, affine fractals, spectrum, spectral measure,
Hilbert spaces, attractor}

\begin{abstract}
We analyze all orthonormal bases of exponentials on the Cantor set defined by Jorgensen and Pedersen in J. Anal. Math. 75,1998, pp 185-228. A complete characterization for all maximal sets of orthogonal exponentials is obtained by establishing a one-to-one correspondence with the spectral labelings of the infinite binary tree.  With the help of this characterization we obtain a sufficient condition for a spectral labeling to generate a spectrum (an orthonormal basis). This result not only provides us an easy and efficient way to construct various of  new spectra for the Cantor measure but also extends many previous results in the literature. In fact, most known examples of orthonormal bases of exponentials correspond to spectral labelings satisfying this sufficient condition.  We also obtain two new conditions for a labeling tree to generate a spectrum when other digits (digits not necessarily in $\{0, 1, 2, 3\}$) are used in the base 4 expansion of integers and  when bad branches are allowed in the spectral labeling. These new conditions yield new examples of spectra and in particular lead to a surprizing example which shows that a maximal set of orthogonal exponentials is not necessarily an orthonormal basis.
\end{abstract}
\maketitle \tableofcontents

\section{Introduction}\label{intr}

For certain probability measures $\mu$ in $\mathbb R^{d}$ there
exist orthonormal bases of countable families of complex
exponentials $\{e^{2\pi i\lambda\cdot x}\,|\, \lambda\in\Lambda\}$ for the Hilbert space  $L^{2}(\mu)$.  We called them Fourier series by analogy with the classical example of intervals on the real line. In this case,
the measure $\mu$ is called a {\it spectral measure} and the set
$\Lambda$ is called a {\it spectrum} for $\mu$. When
$\mu = \frac{1}{|\Omega|}\,dx$ (where $\Omega$ is bounded subset of
positive Lebesgue measure $|\Omega|>0$ and $dx$ is the Lebesgue measure),
the existence of a spectrum is closely related to the well-known
Fuglede conjecture which  asserts that there exists a spectrum for
$\mu$ if and only if $\Omega$ tiles $\mathbb R^{d}$ by translations using 
discrete set. This conjecture was proved to be false in higher
dimensions by Tao \cite{Tao04} and others, but it is still open in
dimension 1 and 2. We refer to \cite{MR2053982,MR1839575,MR1369421,MR1840101} for some
important results and developments related to the spectral pairs
with respect to probability measures that are obtained by restricting the
Lebesgue measure to  bounded sets.

\begin{definition}\label{defi1}
Let $e_\lambda(x):=e^{2\pi i\lambda\cdot x}$, $x\in\br^d,\lambda\in\br^d$. A probability measure $\mu$ on $\br^d$ is said to be a {\it spectral measure} if there exists a set $\Lambda\subset\br^d$ such that the family $\{e_\lambda\,|\,\lambda\in\Lambda\}$ is an orthonormal basis for $L^2(\mu)$. In this case $\Lambda$ is called a {\it spectrum} for the measure $\mu$.
\end{definition}

There exist other probability measures that are not the
restriction of the Lebesgue measure to  bounded sets, but they
admit spectra. The first example of a singular, non-atomic,
spectral measure was constructed by Jorgensen and Pedersen in
\cite{JoPe98}, and Strichartz \cite{Str98} gave a
simplification of part of the proof. These results led to the the
spectral theory for fractal measures which has recently become an
important topic of research in harmonic analysis. These fractal
measures also have very close connections with the theory of
multiresolution analysis in wavelet analysis (see e.g., \cite{MR2358531,MR2268116}).

The Jorgensen-Pedersen measure is constructed on a slight
modification of the Middle Third Cantor set. This can be obtained as follows: consider the interval $[0,1]$. Divide it into 4 equal intervals, and keep the intervals $[0,\frac14]$, and $[\frac12,\frac34]$. Then take each of these intervals and repeat the procedure ad inf. The result is a Cantor set
$$X_4:=\left\{\sum_{k=1}^\infty a_k\frac{1}{4^k}\,|\,\,a_k\in\{0,2\}\right\}.$$
The probability measure $\mu_4$ on $X_4$ assigns measure $\frac12$ to the sets $X_4\cap[0,\frac14]$ and $X_4\cap [\frac24,\frac34]$, measure $\frac14$ to the four intervals at the next stage, etc. It is the Hausdorff measure of Hausdorff dimension $\frac{\ln 2}{\ln 4}=\frac12$.

The set $X_4$ and the measure $\mu_4$ can be defined also in terms of iterated function systems (see \cite{Hut81} for details). Consider the iterated function system (IFS)
$$\tau_0(x)=\frac x4,\quad \tau_2(x)=\frac{x+2}{4},\quad(x\in\br).$$
Then the IFS $\{\tau_0,\tau_2\}$ has a unique attractor $X_4$, i.e., a unique compact subset of $\br$ with the property that
$$X_4=\tau_0(X_4)\cup\tau_2(X_4).$$
The measure $\mu_4$ is the unique probability measure on $\br$ which satisfies the invariance equation:
\begin{equation}\label{eqinv}
\int f(x)\,d\mu_4(x)=\frac{1}{2}\left(\int f\left(\frac{x}{4}\right)\,d\mu_4(x)+\int f\left(\frac{x+2}{4}\right)\,d\mu_4(x)\right),\quad(f\in C_c(\br)).
\end{equation}
Moreover, the measure $\mu_4$ is supported on $X_4$.

In \cite{JoPe98}, the authors proved that the set 
$$\Lambda=\left\{\sum_{k=0}^n 4^k d_k\,|\,d_k\in\{0,1\},n\geq0\right\}$$
is a spectrum for $\mu_4$. 

The results of Jorgensen and Pedersen were further extended for other measures, and new spectra were found in \cite{Str00,LaWa02,DuJo06,DuJo07a,DuJo07b,Li07a,Li07b}. Some surprising convergence properties of the associated Fourier series were discovered in \cite{Str06}.

 Two approaches to harmonic analysis on Iterated Function Systems have been popular: one based on a discrete version of the more familiar and classical second order Laplace differential operator of potential theory, see \cite{MR2246975,MR1840042};  and the other is based on Fourier series. The first model in turn is motivated by infinite discrete network of resistors, and the harmonic functions are defined by minimizing a global measure of resistance, but this approach does not rely on Fourier series. In contrast, the second approach begins with Fourier series, and it has its classical origins in lacunary Fourier series \cite{Kah86}.

In general, for a given probability measure $\mu$ any of the
following possibilities can occur: (i) there exists at most a finite number
of orthogonal complex exponentials in $L^{2}(\mu)$; (ii) there are
infinite families of orthogonal complex exponentials and one of them is
an orthonormal basis for  $L^{2}(\mu)$, and in this case $\mu$ is
a spectral measure. The first example satisfying (i) is  the
Middle Third Cantor set, with its Hausdorff measure of dimension
$\frac{\ln 2}{\ln 3}$ . In \cite{JoPe98} it was proved that for this
measure no three exponentials are mutually orhtogonal. Detailed
analysis on this was given and many new examples were constructed
in a recent paper \cite{DuJo07a}. However,  for a given measure $\mu$ it
remains a very difficult problem to ``characterize'' all the
spectra or the maximal families of orthogonal exponentials. Moreover, it is
not known whether every such a maximal family must be an orthonormal
basis for $L^{2}(\mu)$. The main purpose of  this paper is to
answer all these questions for the measure $\mu_4$.

In section 3 we first establish a one-to-one correspondence
between the labeling of the infinite binary tree and the base $4$
expansions (using the digits $\{0, 1, 2, 3\}$) of the integers. Then
we characterize all maximal sets of orthogonal exponentials in
$L^{2}(\mu_4)$ by showing that they correspond to spectral labelings (Definition \ref{defma1}) of the
binary tree (Theorem \ref{thma3}). In Example \ref{exsp1} we show that there are
maximal sets of orthogonal exponentials which are not spectra for
$\mu_4$. This is surprising, since in the previous examples in the
literature, all maximal sets of orthogonal exponentials were also
spectra for the associated fractal measure.

The spectral labeling characterization helps us obtain one
sufficient condition for a maximal family of exponentials  to an
orthonormal basis for $L^{2}(\mu_4)$ (Theorem \ref{thsp2}). This
sufficient condition improves the known results from \cite{JoPe98, Str00,
LaWa02, DuJo06}, and, as shown in Section 4, it clarifies why some of
the candidates for a spectrum constructed in \cite{ LaWa02, Str00} are
incomplete, and how they can be completed to spectra for
$\mu_{4}$.

 In Section 4 we consider
other digits that can be used for the base 4 expansion of the
integers in the candidate set $\Lambda$, and give some sufficient
conditions when these will generate spectra for $\mu_4$ (Theorem
\ref{tho1}). We construct some examples of spectra and give the
example showing that a maximal set of orthogonal exponentials is not
necessarily a spectrum. In addition a result of Strichartz in
\cite{Str00} is improved with the help of our Theorem \ref{tho1} (see Remark
\ref{remstr}).

In an attempt to obtain a ``complete'' characterization of all
the spectra, in section 5 we present a few other basic properties
of spectra for $\mu_4$ and give another sufficient condition for a
spectral labeling to generate a spectrum (Proposition \ref{propr2}) where
limited number of ``bad'' paths are allowed in the labeling. This
new condition allows us to construct an example (Example \ref{exr4}) of a
spectral labeling that gives us a spectrum even though it does
not satisfy the hypothesis of Theorem \ref{thsp2}. Although we were not
able to obtain a ``complete'' characterization for a maximal
family to generate a spectrum, we believe that a combination of our results Theorem \ref{thsp2} and Proposition \ref{propr2} might come close.

For the sake of clarity, in this paper we focus our discussion on the
fractal measure $\mu_{4}$. We believe that this example has many of the
key features that might occur in more general fractal measures,
and  most of our results can be generalized for other IFS
measures.

\section{Preliminaries}

To define the sets of integers that correspond to families of orthogonal exponentials, in this section we will recall some basic facts about base 4 expansions of integers.
\begin{definition}\label{defba1}
Let $k$ be an integer. Define inductively the sequences $(d_n)_{n\geq0}$ and $(k_n)_{n\geq0}$, with $d_n\in\{0,1,2,3\}$ and $k_n\in\bz$: $k_0:=k$; using division be 4 with remainder, there exist a unique $d_0\in\{0,1,2,3\}$ and 
$k_1\in\bz$ such that $k_0=d_0+4k_1$. If $k_n$ has been defined, then there exist a unique $d_{n}\in\{0,1,2,3\}$ and $k_n\in\bz$ such that $k_n=d_n+4k_{n+1}$. 

The infinite string $d_0d_1\dots d_n\dots$ will be called the {\it base 4 expansion} or the {\it encoding} of $k$. We will use the notation 
$$k=d_0d_1\dots d_n\dots.$$ 
We will denote by $\underline 0$ the infinite sequence $000\dots$, and similarly $\underline 3=333\dots$. The notation $d_0d_1\dots d_n\underline0$ indicates that the infinite string begins with $d_0\dots d_n$ and ends in an infinite repetition of the digit $0$. Similarly for the notation $d_0\dots d_n\underline3$.
\end{definition}

\begin{proposition}\label{propba2}
Let $k\in\bz$ with base 4 expansion $k=d_0\dots d_n\dots$. If $k\geq 0$ then its base 4 expansion ends in $\underline 0$, i.e., there exists $N\geq 0$ such that $d_n=0$ for all $n\geq N$. In this case 
\begin{equation}\label{eqba1}
k=d_0\dots d_N\underline 0=\sum_{n=0}^N4^nd_n.
\end{equation}  
 If $k<0$ then its base 4 expansion ends in $\underline 3$, i.e., there exists $N\geq 0$ such that $d_n=3$ for all $n\geq 3$. In this case 
 \begin{equation}\label{eqba2}
 k=d_0\dots d_n\underline 3=\sum_{n=0}^N4^nd_n-4^{N+1}.
 \end{equation}
  Moreover, if $k$ is defined by the formula on the right-hand side of \eqref{eqba1} or \eqref{eqba2} then its base 4 expansion is $d_0\dots d_N\underline0$, in the first case, or $d_0\dots d_N\underline 3$ in the second case. 
\end{proposition}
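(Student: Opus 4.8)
The plan is to study the auxiliary sequence $(k_n)_{n\geq 0}$ generated by the division algorithm of Definition \ref{defba1} and to show that it eventually becomes constant. Since $d_n\in\{0,1,2,3\}$ is the residue of $k_n$ modulo $4$, the recursion $k_n=d_n+4k_{n+1}$ says precisely that $k_{n+1}=\lfloor k_n/4\rfloor$. Iterating the elementary identity $\lfloor\lfloor x/4\rfloor/4\rfloor=\lfloor x/16\rfloor$ (valid for all real $x$) gives the closed form $k_n=\lfloor k/4^n\rfloor$ for every $n$. Thus the tail of the expansion is governed entirely by the behaviour of $k/4^n$ as $n\to\infty$, and the two cases of the statement correspond to whether $k/4^n$ decreases to $0$ from above or increases to $0$ from below.

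Suppose first that $k\geq 0$. Then $k/4^n\geq 0$, and $k/4^n<1$ as soon as $4^n>k$, so $k_n=\lfloor k/4^n\rfloor=0$ for all $n$ beyond some $N$. For such $n$ the division step forces $d_n=0$ and $k_{n+1}=0$, so the expansion ends in $\underline 0$. Unrolling $k_n=d_n+4k_{n+1}$ telescopically yields $k=\sum_{n=0}^{N-1}4^nd_n+4^Nk_N=\sum_{n=0}^{N}4^nd_n$, since $k_N=d_N=0$, which is \eqref{eqba1}. Suppose instead that $k<0$. Then $-1\leq k/4^n<0$ once $4^n\geq\abs{k}$, so $k_n=\lfloor k/4^n\rfloor=-1$ for all large $n$; let $N$ be the least index with $k_N=-1$, noting that $k_n=-1$ implies $k_{n+1}=\lfloor -1/4\rfloor=-1$, so the value persists. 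Since $-1=3+4\cdot(-1)$, each such step forces $d_n=3$ and $k_{n+1}=-1$, so the expansion ends in $\underline 3$. Unrolling as before and using $k_{N+1}=-1$ gives $k=\sum_{n=0}^{N}4^nd_n+4^{N+1}k_{N+1}=\sum_{n=0}^{N}4^nd_n-4^{N+1}$, which is \eqref{eqba2}.

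For the converse, suppose $k$ is defined by the right-hand side of \eqref{eqba1} (resp. \eqref{eqba2}) with digits $d_n\in\{0,1,2,3\}$. I would exhibit an explicit realization of the defining recursion and then invoke uniqueness: in the first case set $k_n:=\sum_{m=n}^{N}4^{m-n}d_m$ for $n\leq N$ and $k_n:=0$ for $n>N$; in the second case set $k_n:=\sum_{m=n}^{N}4^{m-n}d_m-4^{N+1-n}$ for $n\leq N$ and $k_n:=-1$ for $n>N$. A direct check shows $k_0=k$ and $k_n=d_n+4k_{n+1}$ with $d_n\in\{0,1,2,3\}$ at every step, so by the uniqueness of the residue in each division step (asserted in Definition \ref{defba1}) this sequence is precisely the base $4$ expansion of $k$, namely $d_0\dots d_N\underline 0$ (resp. $d_0\dots d_N\underline 3$).

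The argument is essentially routine; the only point requiring care is the negative case, where one must recognize that the tail stabilizes at $-1$ rather than at $0$ and then reconcile the resulting constant with the term $-4^{N+1}$ in \eqref{eqba2}. In particular the redundancy $3\cdot 4^N-4^{N+1}=-4^N$ means the same integer is represented by \eqref{eqba2} for several values of $N$, though the infinite expansion is unchanged; keeping the indexing consistent between the forward computation of the $k_n$ and the telescoped sum is the main bookkeeping hurdle.
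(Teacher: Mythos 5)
Your proof is correct and follows essentially the same route as the paper: both track the quotient sequence $(k_n)$, show it stabilizes at $0$ (for $k\geq 0$) or at $-1$ (for $k<0$), and telescope the recursion $k_n=d_n+4k_{n+1}$ to obtain \eqref{eqba1} and \eqref{eqba2}. Your closed form $k_n=\lfloor k/4^n\rfloor$ and the explicit verification of the converse are somewhat more detailed than the paper's treatment (which cites the nonnegative case as well known and leaves the converse as a simple computation), but the underlying argument is identical.
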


\begin{proof}
For $k\geq0$, the base 4 expansion is well known. Let us consider the case when $k<0$ and let $k=d_0\dots d_n\dots$ be its base 4 expansion. Take $N\geq0$ such that 
$k\geq -4^{N+1}$. Let $(k_n)_{n\geq 0}$ be defined as in Definition \ref{defba1}. Then $0>k_0=k\geq -4^{N+1}$. 
Since $k_1=\frac{k_0-d_0}{4}$ it follows that $k_1\geq\frac{-4^{N+1}-3}{4}\geq -4^N$. By induction $0>k_{N+1}\geq -4^0=-1$. So $k_{N+1}=-1$. Then $k_{N+2}=\frac{-1-3}{4}$, so $k_n=-1$ and $d_n=3$ for all $n\geq N+1$. Thus the base 4 expansion of $k$ ends in $\underline 3$. Moreover, since 
$k_{N+1}=-1$, we have that $k_{N}=d_{N}-4$, $k_{N-1}=d_{N-1}+4k_N=d_{N-1}+4d_N-4^2$, and, by induction
$$k=k_0=d_0+4d_1+\dots+ 4^Nd_N-4^{N+1}.$$
\end{proof}

\begin{lemma}\label{lemba3}
Let $b$ be an integer and let $b=b_0b_1\dots$ be its base 4 expansion. Let $a$ be another integer that has base 4 expansion ending with the expansion of $b$, i.e., $a=a_0\dots a_nb_0b_1\dots$. Then 
\begin{equation}\label{eqba3}
a=a_0+4a_1+\dots+4^na_n+4^{n+1}b.
\end{equation}
Conversely, if the integers $a$ and $b$ satisfy \eqref{eqba3} with $a_0\dots a_n\in\{0,1,2,3\}$, then the base $4$ expansion of $a$ has the form $a=a_0\dots a_nb_0b_1\dots$, where $b=b_0b_1\dots$ is the base $4$ expansion of $b$.

The base 4 expansion $d_0d_1\dots$ of an integer $k$ is completely determined by the conditions: $d_n\in\{0,1,2,3\}$ for all $n\geq0$, and 
$$\sum_{n=0}^Nd_n4^n\equiv k\mod 4^{N+1},\quad( N\geq0).$$
\end{lemma}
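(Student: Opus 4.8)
The plan is to reduce all three assertions to the injectivity of the encoding map, which is already buried in Proposition \ref{propba2}: its ``moreover'' clause shows that an integer is recovered from its base $4$ expansion through formula \eqref{eqba1} or \eqref{eqba2}, according to whether the expansion terminates in $\underline 0$ or $\underline 3$. In particular, two integers with the same base $4$ expansion must be equal. This is the only nontrivial input; the rest is unwinding the recursion of Definition \ref{defba1} and some bookkeeping.

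For the first assertion I would unwind that recursion. Writing $(d_m),(k_m)$ for the sequences attached to $a$, repeated substitution of $k_m=d_m+4k_{m+1}$ yields $a=\sum_{j=0}^n 4^j a_j+4^{n+1}k_{n+1}$, using $d_j=a_j$ for $j\le n$. It then suffices to identify $k_{n+1}$ with $b$. Continuing the recursion from $k_{n+1}$ produces exactly the digits $d_{n+1}d_{n+2}\dots=b_0b_1\dots$, since division with remainder is deterministic; hence $k_{n+1}$ and $b$ have the same base $4$ expansion, and by the injectivity noted above $k_{n+1}=b$, which is precisely \eqref{eqba3}.

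For the converse I would feed \eqref{eqba3} into the ``moreover'' clause of Proposition \ref{propba2}. The expansion of $b$ terminates in $\underline 0$ or $\underline 3$, so $b=\sum_{m=0}^M 4^m b_m$ or $b=\sum_{m=0}^M 4^m b_m-4^{M+1}$. Substituting into $a=\sum_{j=0}^n 4^j a_j+4^{n+1}b$ collects everything into a single sum $\sum_l 4^l e_l$ (minus a leading power of $4$ in the negative case), whose digits $e_l\in\{0,1,2,3\}$ agree with $a_0\dots a_n$ for $l\le n$ and with $b_0b_1\dots$ thereafter. The moreover clause then reads off that this is the base $4$ expansion $a=a_0\dots a_n b_0b_1\dots$.

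The third assertion splits into existence and uniqueness. Existence is immediate from the identity $k=\sum_{n=0}^N 4^n d_n+4^{N+1}k_{N+1}$ obtained in the first step, which gives $\sum_{n=0}^N 4^n d_n\equiv k\pmod{4^{N+1}}$. For uniqueness I would induct on $N$: the congruence at $N=0$ forces $d_0\equiv k\pmod 4$, determining $d_0$ among $\{0,1,2,3\}$; granting $d_0,\dots,d_{N-1}$ fixed, the level-$N$ congruence pins down $4^N d_N$ modulo $4^{N+1}$, hence $d_N$ modulo $4$, hence $d_N$ itself. Since the genuine expansion satisfies these congruences, any digit sequence satisfying them coincides with it. I expect the only delicate point to be the bookkeeping in the converse, where one must keep the cases $b\ge 0$ and $b<0$ straight and verify that the reindexed digit string really satisfies the hypotheses of the moreover clause; every other step is a direct appeal to injectivity of the encoding or a one-line induction.
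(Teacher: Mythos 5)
Your proposal is correct and follows the same route as the paper, which simply states that the lemma ``follows directly from Proposition \ref{propba2} by a simple computation''; you have supplied exactly that computation, unwinding the recursion of Definition \ref{defba1} and invoking the ``moreover'' clause of Proposition \ref{propba2} for injectivity of the encoding. The case bookkeeping for $b\ge 0$ versus $b<0$ in the converse and the induction for uniqueness are both handled correctly.
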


\begin{proof}
The proof follows directly from Proposition \ref{propba2} by a simple computation.
\end{proof}

\section{Main results}

In this section, we will characterize maximal sets of orthogonal exponentials and give a sufficient condition for such a maximal set to generate an orthonormal basis for $L^2(\mu_4)$.
\subsection{Maximal sets of orthogonal exponentials}
First we will characterize maximal sets of orthogonal exponentials. These will correspond to sets of integers whose base 4 expansions can be arranged in a binary tree. We will call this arrangement a {\it spectral labeling} of the binary tree.

\begin{definition}\label{defma1}
Let $\mathcal T$ be the complete infinite binary tree, i.e., the oriented graph that has vertices 
$$\mathcal V:=\{\ty\}\cup\left\{\epsilon_0\dots \epsilon_n\,|\, \epsilon_k\in\{0,1\}, n\geq0\right\},$$
and edges $\mathcal E$: $(\ty,0), (\ty,1)$, $(\epsilon_0\dots \epsilon_n,\epsilon_0\dots \epsilon_n\epsilon_{n+1})$ for all $\epsilon_0\dots \epsilon_n\in\mathcal V$, and $\epsilon_{n+1}\in\{0,1\}$, $n\geq0$. The vertex $\ty$ is the {\it root} of this tree.

A {\it spectral labeling} $\mathcal L$ of the binary tree is a labeling of the edges of $\mathcal T$ with labels in $\{0,1,2,3\}$ such that the following properties are satisfied:
\begin{enumerate}
\item For each vertex $v$ in $\mathcal V$, the two edges that start from $v$ have labels of different parity.
\item For each vertex $v$ in $\mathcal V$, there exist an infinite path in the tree that starts from $v$ and ends with edges that are all labeled $0$ or all labeled $3$. 
\end{enumerate}

We will use the notation $\mathcal T(\mathcal L)$ to indicate that we use the labeling $\mathcal L$.

Given a spectral labeling, we will identify the vertices $v\in\mathcal V$ with the finite word obtained by reading the labels of the edges in the unique path from the root $\ty$ to the vertex $v$. We will sometimes write $v=d_0d_1\dots d_n$, to indicate that the vertex $v$ is the one that is reached from the root by following the labels $d_0\dots d_n$.

We identify an {\it infinite path} in the tree $\mathcal T(\mathcal L)$ from a vertex $v$ with the infinite word obtained by reading the labels of the edges along this path. See Figure \ref{fig1} for the first few levels in a spectral labeling.
\end{definition}

\begin{figure}[ht]\label{fig1}
\centerline{ \vbox{\hbox{\epsfxsize 5cm\epsfbox{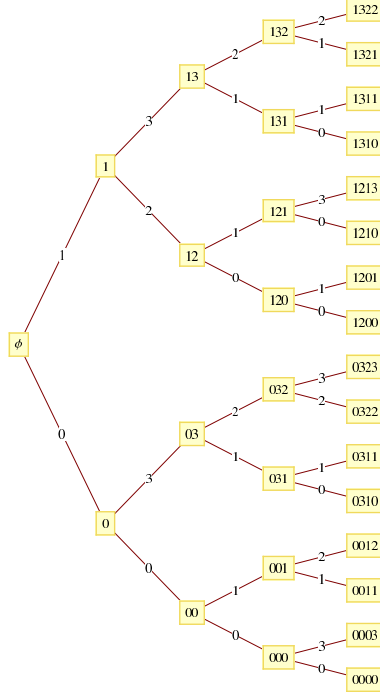}}}}
\caption{ The first levels in a spectral labeling of the binary tree. $0323$ is a path in the tree from the root $\ty$, $13$ is a path in the tree from the vertex $12$.}
\end{figure}

\begin{definition}\label{defma2}
Let $\mathcal L$ be a spectral labeling of the binary tree. Then {\it the set of integers associated to }$\mathcal L$ is the set
$$\Lambda(\mathcal L):=\left\{k=d_0d_1\dots...\,|\, d_0d_1\dots\mbox{ is an infinite path in the tree starting from }\ty\mbox{ and ending in }\underline0\mbox{ or }\underline 3\right\}.$$
\end{definition}

\begin{theorem}\label{thma3}
Let $\Lambda$ be a subset of $\br$ with $0\in\Lambda$. Then $\{e_\lambda\,|\,\lambda\in\Lambda\}$ is a maximal set of mutually orthogonal exponentials if and only if there exists a spectral labeling $\mathcal L$ of the binary tree such that $\Lambda=\Lambda(\mathcal L)$.
\end{theorem}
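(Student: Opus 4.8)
The plan is to recast everything in terms of the zero set of the Fourier transform of $\mu_4$, since orthogonality of $e_\lambda$ and $e_{\lambda'}$ in $L^2(\mu_4)$ is equivalent to $\widehat{\mu_4}(\lambda-\lambda')=0$. First I would compute $\widehat{\mu_4}$ from the invariance equation \eqref{eqinv}: iterating gives the infinite product $\widehat{\mu_4}(\xi)=\prod_{k=1}^\infty \cos(2\pi\, 4^{-k}\xi)$ (up to the standard normalization), whose zeros occur exactly when some factor vanishes, i.e.\ when $4^{-k}\xi$ is a half-integer for some $k$. Translating this into base $4$ language via Lemma \ref{lemba3}, the condition $\widehat{\mu_4}(m)=0$ for an integer $m$ becomes a condition that, at some level, the relevant digit of $m$ is odd. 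This is the arithmetic heart that links orthogonality to the parity condition (1) in the definition of a spectral labeling. I expect to record this as a preliminary observation: for integers $\lambda\neq\lambda'$, the exponentials $e_\lambda,e_{\lambda'}$ are orthogonal precisely when the first position where the base $4$ expansions of $\lambda$ and $\lambda'$ differ carries digits of opposite parity, equivalently when $\lambda-\lambda'$ has an odd digit somewhere. (Here I would first argue that any maximal orthogonal family consisting of exponentials must be supported on integer frequencies, using that $\widehat{\mu_4}$ vanishes only at integers of this special form and that $0\in\Lambda$.)

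For the direction that a spectral labeling yields a maximal orthogonal set, I would take $\Lambda=\Lambda(\mathcal L)$ and verify the two requirements. Mutual orthogonality: given two distinct vertices-paths $\lambda=d_0d_1\dots$ and $\lambda'=d_0'd_1'\dots$ in $\mathcal T(\mathcal L)$, they agree along some initial segment and then diverge at the first branching vertex $v$; by property (1) the two edges leaving $v$ have labels of different parity, so the first differing digit of $\lambda-\lambda'$ is odd, whence $\widehat{\mu_4}(\lambda-\lambda')=0$ and the exponentials are orthogonal. Maximality: I would show that no integer $m\notin\Lambda(\mathcal L)$ can be added while preserving orthogonality. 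Writing the base $4$ expansion of $m$ and following it down the tree, because every vertex has exactly two outgoing edges whose labels realize both parities, the expansion of $m$ must eventually deviate from every admissible path in a way that produces an \emph{even} difference digit against some element of $\Lambda(\mathcal L)$; property (2), guaranteeing that from every vertex there is a path terminating in $\underline0$ or $\underline3$, is what supplies an actual element of $\Lambda(\mathcal L)$ (a genuine integer, by Proposition \ref{propba2}) witnessing non-orthogonality with $m$.

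For the converse, I would start with a maximal orthogonal family $\{e_\lambda\,|\,\lambda\in\Lambda\}$ with $0\in\Lambda$ and construct the labeling. After reducing to $\Lambda\subset\bz$, I arrange the elements of $\Lambda$ by their base $4$ expansions and build the tree level by level: at each vertex, the set of $\lambda$ passing through it must, by pairwise orthogonality, use digits that are pairwise orthogonal-compatible, and I would argue this forces at most two children with labels of opposite parity, giving property (1). Property (2) I would derive from the requirement that each $\lambda\in\Lambda$ is a genuine integer, so by Proposition \ref{propba2} its expansion terminates in $\underline0$ or $\underline3$, which pins down the terminal behaviour of every path; maximality then forces both branches to be populated at each vertex so that the tree is complete. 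The main obstacle I anticipate is this last bookkeeping step in the converse: one must show that maximality is \emph{exactly} equivalent to completeness of the binary tree together with property (2), i.e.\ that you can neither have a ``missing'' child (which would let you add an orthogonal exponential, contradicting maximality) nor an extra incompatible element (contradicting orthogonality). Making precise the claim that every finite word with the correct parity/termination pattern corresponds to a unique integer in $\Lambda$, and that a maximal family saturates all of these, is where the careful argument lies; the rest is a fairly direct translation between the analytic orthogonality condition and the combinatorial tree structure.
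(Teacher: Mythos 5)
Your plan follows essentially the same route as the paper's proof: reduce orthogonality to the zero set $\mathcal Z=\{4^j(2k+1)\}$ of $\widehat{\mu_4}$ (Lemma \ref{lemma5}), translate this into the condition that the first base-4 digit where two integers differ has odd difference, build the tree by showing each digit set has zero or two elements of opposite parity with maximality ruling out a single child (Lemma \ref{lemtwo}), derive property (ii) from the fact that integer expansions terminate in $\underline0$ or $\underline3$, and prove maximality of $\Lambda(\mathcal L)$ by producing, via property (ii), an integer witness whose difference with a candidate $m$ is $4^k\cdot(\pm2+4\bz)\notin\mathcal Z$. One small caution: your parenthetical reformulation ``orthogonal iff $\lambda-\lambda'$ has an odd digit somewhere'' is not correct (e.g.\ $6=21\underline0$ has an odd digit but $6\notin\mathcal Z$); the right statement, and the one your argument actually uses, is that the \emph{first nonzero} digit of $\lambda-\lambda'$ is odd.
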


\begin{proof}

We will need several lemmas. 
\begin{lemma}\label{lemma4}
The Fourier transform of $\mu_4$ is 
\begin{equation}\label{eqma1}
\widehat\mu_4(t)=e^{\frac{2\pi i t}{3}}\prod_{j=1}^\infty \cos\left(2\pi \frac{t}{4^j}\right),\quad(t\in\br).
\end{equation}
The convergence of the infinite product is uniform on compact subsets of $\br$.
\end{lemma}
\begin{proof}
Applying the invariance equation \eqref{eqinv} to the exponential function $e_t$, $t\in\br$, we get
$$\widehat\mu_4(t)=\frac{1+e^{2\pi i2\frac{t}{4}}}{2}\widehat\mu_4(\frac t4)=e^{2\pi i\frac{t}{4}}\cos\left(2\pi \frac t4\right)\widehat\mu_4\left(\frac t4\right).$$
Since $\widehat\mu_4(0)=1$, the cosine function is Lipschitz near $0$, and $\cos 0=1$, we can iterate this relation to infinity and obtain 
$$\widehat\mu_4(t)=e^{2\pi i\sum_{j=1}^\infty\frac t{4^j}}\prod_{j=1}^\infty\cos\left(2\pi\frac t{4^j}\right).$$
\end{proof}

\begin{lemma}\label{lemma5}
Let $\lambda,\lambda'\in\br$. Then $e_\lambda$ is orthogonal to $e_{\lambda'}$ in $L^2(\mu_4)$ iff $\lambda-\lambda'\in\mathcal Z$, where
\begin{equation}\label{eqma2}
\mathcal Z:=\{x\in\br\,|\,\widehat\mu_4(x)=0\}=\{4^j(2k+1)\,|\,0\leq j\in\bz,k\in\bz\}.
\end{equation}
\end{lemma}

\begin{proof}
We have $\ip{e_\lambda}{e_{\lambda'}}=\int e^{2\pi i(\lambda-\lambda')x}\,d\mu_4(x)=\widehat\mu_4(\lambda-\lambda')$. So $e_\lambda\perp e_{\lambda'}$ iff $\lambda-\lambda'\in\mathcal Z$. 
Using the infinite product in \eqref{eqma2}, we obtain that $\lambda-\lambda'\in\mathcal Z$ iff there exists $j\geq 1$ such that $\cos\left(2\pi \frac{\lambda-\lambda'}{4^j}\right)=0$. So $2\pi(\lambda-\lambda')\in 4^j\pi(\bz+\frac12)$. This implies \eqref{eqma2}.
\end{proof}

Note that, since $0\in\Lambda$, for any element $a\in\Lambda$, we have $e_a\perp e_0$. Then with Lemma \ref{lemma5}, we must have $a\in\mathcal Z\subset\bz$.

We will use the following notation:
for an integer $k$ with base $4$ expansion $k=d_0\dots d_n\dots$, we will denote by $d_n(k):=d_n$, the $n$-th digit of the base 4 expansion of $k$.

The next lemma follows from an easy computation.
\begin{lemma}\label{lemma6}
If $n,n'\geq0$, $k,k,a,a'\in\bz$ with $a,a'$ not divisible by 4, and $4^n(4k+a)=4^{n'}(4k'+a')$ then $n=n'$.
\end{lemma}

\begin{lemma}\label{lemtwo}
Let $\Lambda$ be a subset of $\br$ with $0\in\Lambda$. Assume $\{e_\lambda\,|\,\lambda\in\Lambda\}$ is a maximal set of orthogonal exponentials in $L^2(\mu_4)$. Then for $d_0,\dots,d_{n-1}\in\{0,1,2,3\}$ the set 
$$D(d_0\dots d_{n-1}):=\{d_n(a)\,|\, a\in\Lambda, d_0(a)=d_0,\dots, d_{n-1}(a)=d_{n-1}\}$$
has either zero or two elements of different parity. This means that the $n$-th digit of the base 4 expansion of elements in $\Lambda$ with prescribed first $n-1$ digits, can take only $0$ or $2$ values, and if it takes 2 values,  then these values must have different parity, i.e., $\{0,1\}$, or $\{0,3\}$, or $\{1,2\}$ or $\{2,3\}$.

\end{lemma}

\begin{proof}
Suppose $D(d_0\dots d_{n-1})$ has at least one element. Suppose $a,a'\in D(d_0\dots d_{n-1})$ with $d_k(a)=d_k(a')=d_k$ for all $0\leq k\leq n-1$, and assume $d_n(a)\neq d_n(a')$. 

Then (see Lemma \ref{lemba3}) there exist $b,b'\in\bz$ such that
$$a=4^{n+1}b+4^nd_n(a_0)+4^{n-1}d_{n-1}+\dots+d_0,\quad a'=4^{n+1}b'+4^nd_n(a_0')+4^{n-1}d_{n-1}+\dots+d_0$$

Then $a-a'=4^n(4(b-b')+d_n(a)-d_n(a'))$. By Lemma \ref{lemma5}, since $a,a'\in\Lambda$, we must have $a-a'\in\mathcal Z$, so $a-a'=4^m(2k+1)=4^m(4l+e)$ for some $m\geq0$, $k,l\in\bz$, $e\in\{1,3\}$. Thus, with Lemma \ref{lemma6}, $n=m$ and $d_n(a)-d_n(a')$ is an odd number. In particular, it follows that $D(d_0\dots d_{n-1})$ contains at most 2 elements.

Suppose now that $D(d_0\dots d_{n-1})$ has just one element. Then for all $a\in\Lambda$, with $d_k(a)=d_k$ for all $0\leq k\leq n-1$, one has that $d_n(a)$ is constant $d_n$.

Let $d_n':=d_n+1\mod 4$ and let $a':=4^nd_n'+4^{n-1}d_{n-1}+\dots+d_0$. We claim that $e_{a'}$ is orthogonal to all $e_{a}$, $a\in\Lambda$.

Let $a\in\Lambda$.

Case I: $d_k(a)=d_k$ for all $0\leq k\leq n-1$. Then, with Lemma \ref{lemba3}, for some $b\in \bz$,
$$a=4^{n+1}b+4^nd_n+4^{n-1}d_{n-1}+\dots+d_0$$
so
$a-a'=4^n(4b+d_{n}-d_{n}')\in 4^n(2\bz+1)\subset\mathcal Z$. Therefore, with Lemma \ref{lemma5}, $e_{a'}\perp e_{a}$.

Case II: There is an integer $0\leq k\leq n-1$ such that $d_0(a)=d_0,\dots, d_{k-1}(a)=d_{k-1}$ and $d_k(a)\neq d_k$. Then for some $b\in\bz$,
$$a=4^{k+1}b+4^kd_k(a)+4^{k-1}d_{k-1}+\dots+d_0$$
Since $D(d_0\dots d_{n-1})$ is not empty, there is a $a''\in\Lambda$ such that $d_0(a'')=d_0,\dots,d_k(a'')=d_k$, so 
$$a''=4^{k+1}b''+4^kd_k+4^{k-1}d_{k-1}+\dots+d_0,$$
for some $b''\in\bz$. 

Then, as before, since $a,a''$ are in the tree, and they differ first time at the $k$-th digit, we have that $d_k-d_k(a)$ is odd.

It follows that $a-a'=4^k(4b-4^{n-k}d_n'-4^{n-k-1}d_{n-1}-\dots-4d_{k+1}+d_k(a)-d_k)\in 4^k(2\bz+1)\subset \mathcal Z$. Hence $e_b\perp e_{a'}$.

\end{proof}

We construct the spectral labeling $\mathcal L$ as follows: we label the root of the tree by $\emptyset$. Using Lemma \ref{lemtwo}, the set $D(\emptyset):=\{d(a_0)\,|\, a_0\in\Lambda\}$ has two elements $d_0$ and $d_0'$. We label the edges from $\emptyset$ by $d_0$ and $d_0'$.

By induction, if we constructed the label $d_0\dots d_n$ for a vertex, this means that there exists an element $a$ of $\Lambda$ that has base $4$ expansion starting with $d_0\dots d_n$. Therefore, using Lemma \ref{lemtwo}, the set $D(d_0\dots d_n)$ contains exactly two elements of different parity $e,e'$. We label the edges that start from the vertex $d_0\dots d_n$ by these elements $e,e'$. In particular we have that the sets 
$D(d_0\dots d_ne)$ and $D(d_0\dots d_ne')$ are not empty. 

Next, we check that, from any vertex in this tree, there exists an infinite path that ends in $\underline 0$ or $\underline 3$.

Consider a vertex in this tree, and let $d_0\dots d_{n}$ be its label. Then, by construction, the set $D(d_0\dots d_{n})$ is not empty. Therefore there is some $a$ in $\Lambda$ such that $d_0(a)=d_0,\dots,d_{n}(a)=d_{n}$. If we denote $d_k:=d_k(a)$ for all $k\geq n$, then by construction the tree contains the vertices labeled 
$d_0\dots d_k$ for all $k\geq0$. Since the string $d_0d_1\dots$ is the base $4$ expansion of $a$, it follows that the infinite sequence $d_0d_1\dots$ ends in either $\underline 0$ or $\underline 3$. Therefore there is an infinite path from the vertex $d_0\dots d_n$ that ends in either $\underline 0$ or $\underline 3$.

Finally, we have to check that $\Lambda=\Lambda(\mathcal L)$. If $a\in\Lambda$ and it has base 4 expansion $a=d_0d_1\dots$, then the vertices $d_0\dots d_k$ are all in the tree $\mathcal T(\mathcal L)$ so the infinite path $d_0d_1\dots$ is a path in this tree starting from the root $\emptyset$. Thus $\Lambda\subset \Lambda(\mathcal L)$.

For the converse we prove the following: 
\begin{lemma}\label{lemto}
If $a=d_0d_1\dots, a'=d_0'd_1'\dots$ are two distinct infinite paths in the binary tree $\Lambda(\mathcal L)$ starting from the root, that end in either $\underline 0$ or $\underline 3$, then $e_{a}\perp e_{a'}$. 
\end{lemma}

\begin{proof}
Let $k\geq 0$ be the first index such that $d_k\neq d_k'$. Then $d_0=d_0',\dots,d_{k-1}=d_{k-1}'$ and since $\mathcal L$ is a spectral labeling, we have that $d_k-d_k'$ is odd. With Lemma \ref{lemba3} there exist $b,b'\in\bz$ such that
$$a=4^{k+1}b+4^kd_k+4^{k-1}d_{k-1}+\dots+d_0,\quad a'=4^{k+1}b'+4^kd_k'+4^{k-1}d_{k-1}'+\dots+d_0'.$$
Then $a-a'=4^k(4(b-b')+d_k-d_k')\in 4^k(2\bz+1)\subset \mathcal Z$. So $e_{a}\perp e_{a'}$.
\end{proof}

Lemma \ref{lemto} shows that, since $\mathcal L$ is a spectral labeling, the set $\{e_\lambda\,|\,\lambda\in\Lambda(\mathcal L)\}$ is a set of mutually orthogonal exponentials. Since $\Lambda\subset \Lambda(\mathcal L)$ and $\Lambda$ is maximal, it follows that $\Lambda=\Lambda(\mathcal L)$.

It remains to prove that, if $\mathcal L$ is a spectral labeling, then $\Lambda(\mathcal L)$ corresponds to a maximal set of exponentials. We have seen above that $\Lambda(\mathcal L)$ corresponds to a family of orthogonal exponentials; we have to prove it is maximal. 
 Suppose there exists $\lambda\in\br$ such that $e_\lambda\perp e_a$ for all $a\in\Lambda(\mathcal L)$. 
 In particular $e_\lambda\perp e_0$, and with Lemma \ref{lemma5}, we have $\lambda\in\bz$. 
 Let $d_0d_1\dots$ be the base 4 expansion of $\lambda$. Let $k\geq0$ be the first index such that $d_0\dots d_{k}$ is not in the tree $\mathcal T(\mathcal L)$. One of the labels of the edges from the vertex $d_0\dots d_{k-1}$ has the same parity as $d_k$, and is different from $d_k$. Let $d_k'$ be this label. Then $d_k-d_k'\in\{-2,2\}$. Using property (ii) in the definition of a spectral labeling, there exists an infinite path $a$ in the tree that starts with $d_0\dots d_{k-1}d_k'$ and ends with $\underline 0$ or $\underline3$. Then,  
 $$a=d_0+\dots+4^{k-1}d_{k-1}+4^kd_k+4^{k+1}b,\quad\lambda=d_0+\dots+4^{k-1}d_{k-1}+4^kd_k'+4^{k+1}b',$$
for some $b,b'\in\bz$. Then $a-\lambda=4^{k}(d_k-d_k'+4(b-b'))\not\in\mathcal Z$, because $d_k-d_k'$ is even, and not a multiple of $4$ (see Lemma \ref{lemma6}). With Lemma \ref{lemma5}, $e_\lambda$ is not perpendicular to $e_a$. 
This shows that $\Lambda(\mathcal L)$ corresponds to a maximal set of orthogonal exponentials.

This concludes the proof of Theorem \ref{thma3}.
\end{proof}

\subsection{Spectral sets}
Theorem \ref{thma3} shows that when a spectral labeling $\mathcal L$ of the binary tree is given, it generates a maximal family of mutually orthogonal exponentials, by reading base 4 expansions from the tree. 
In this section we will give a sufficient condition for a spectral labeling to generate a spectral set, i.e., an orthonormal basis of exponentials.

We will begin by defining certain ``good'' paths. The restriction on the spectral labeling will require that good paths can be found from any vertex.

\begin{definition}\label{defsp1}
Let $a\in\bz$ and let $a=d_0d_1\dots$ be its base 4 expansion. We call the {\it length} of $a$ the smallest integer $n$ such that either $d_k=0$ for all $k\geq n$ or $d_k=3$ for all $k\geq n$. We will use the notation $n=\lng(a)$.

Fix integers $P,Q>0$. Let $\omega=\omega_0\omega_1\dots$ be an infinite path ending in $\underline 0$ or $\underline 3$,  $\omega_n\in\{0,1,2,3\}$ for all $n\geq 0$. We will say that the path $\omega$ is {\it $(P,Q)$-good} (or just {\it good}) if the there exists $n\geq 0$ such that the following two conditions are satisfied:
\begin{enumerate}
\item
$\omega_0,\dots,\omega_n\in\{0,2\}$ and the number of occurrences of $2$ in $\omega_0\dots\omega_n$ is less than $P$;
\item $\lng(\omega_{n+1}\omega_{n+2}\dots)\leq Q$.
\end{enumerate}
\end{definition} 

\begin{theorem}\label{thsp2}
Let $\mathcal L$ be a spectral labeling of the binary tree. Suppose there exist integers $P,Q\geq0$ such that for any vertex $v$ in the tree, there exists a $(P,Q)$-good path starting from the vertex $v$. Then the set $\Lambda(\mathcal L)$ is a spectrum for $\mu_4$.
\end{theorem}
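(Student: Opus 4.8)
The plan is to run the standard completeness criterion of Jorgensen--Pedersen, but organized around the binary tree so that the good--path hypothesis can be fed in as a contraction estimate. By Theorem \ref{thma3} the family $\{e_\lambda\,|\,\lambda\in\Lambda(\mathcal L)\}$ is already orthonormal, so it is a spectrum iff it is complete. Since $\mu_4$ is supported in $[0,1]$, the family $\{e_{-t}\,|\,t\in[0,1]\}$ is total: if $f\perp e_{-t}$ for all $t\in[0,1]$, the entire function $t\mapsto\int e^{-2\pi itx}f\,d\mu_4$ vanishes on an interval, hence identically, so $f=0$. As Parseval holds for every element of a total family iff the system is complete, it suffices to prove
\[
h(t):=\sum_{\lambda\in\Lambda(\mathcal L)}|\widehat\mu_4(t+\lambda)|^2=1\qquad(t\in[0,1]).
\]
Bessel's inequality gives $h\le1$, so the goal is $h\equiv1$. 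For each vertex $v$ let $\Lambda_v$ be the integers read along the terminating paths issuing from $v$, put $h_v=\sum_{\lambda\in\Lambda_v}|\widehat\mu_4(\cdot+\lambda)|^2$ and $g_v:=1-h_v\ge0$; then $h=h_\emptyset$ and I must show $g_\emptyset\equiv0$.

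First I would record two structural facts. Writing the labels at $v$ as $c_e\in\{0,2\}$ and $c_o\in\{1,3\}$ (they have different parity by property (i)), the tree splits $\Lambda_v=(c_e+4\Lambda_{ve})\sqcup(c_o+4\Lambda_{vo})$. Combining this with the scaling relation $|\widehat\mu_4(s)|^2=\cos^2(\tfrac{\pi s}{2})\,|\widehat\mu_4(\tfrac s4)|^2$ from Lemma \ref{lemma4} and the parity identity $\cos^2(\tfrac{\pi(t+c)}{2})=\cos^2(\tfrac{\pi t}{2})$ for even $c$, $=\sin^2(\tfrac{\pi t}{2})$ for odd $c$, one gets the convex averaging identity
\[
g_v(t)=\cos^2\!\Big(\tfrac{\pi t}{2}\Big)\,g_{ve}\!\Big(\tfrac{t+c_e}{4}\Big)+\sin^2\!\Big(\tfrac{\pi t}{2}\Big)\,g_{vo}\!\Big(\tfrac{t+c_o}{4}\Big),
\]
whose two weights sum to $1$. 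Second, applying Bessel to the vector $2\pi i x\,e_{-t}$ yields $\sum_{\lambda\in\Lambda_v}\big|\tfrac{d}{dt}\widehat\mu_4(t+\lambda)\big|^2\le4\pi^2$, whence by Cauchy--Schwarz $|h_v'|\le4\pi$: the whole family $\{g_v\}$ is uniformly $4\pi$-Lipschitz (equicontinuous).

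Since the maps $t\mapsto\tfrac{t+c}{4}$ preserve $[0,1]$, set $M:=\sup_v\sup_{t\in[0,1]}g_v(t)\in[0,1]$; the plan is to show $M\le(1-\eta)M$ for some $\eta>0$, forcing $M=0$. Fix $v$ and a $(P,Q)$-good path $\omega$ from $v$, and let $w^\ast$ be its prefix up to the vertex $u=vw^\ast$ from which $\omega$ is constant $\underline0$ (resp.\ $\underline3$). Iterating the averaging identity along this constant tail, bounding the off-branch terms by $M$ and using the Lipschitz bound together with $g_{u0^k}(0)=0$ (resp.\ $g_{u3^k}(1)=0$, valid because $0\in\Lambda_{u0^k}$, resp.\ $-1\in\Lambda_{u3^k}$, so that $\widehat\mu_4=1$ there) to annihilate the deep term as the argument is driven to the fixed point, the surviving weight telescopes to $\prod_{i\ge0}\cos^2(\tfrac{\pi s}{2\cdot4^i})=|\widehat\mu_4(s)|^2$ and gives the key estimate
\[
g_u(s)\le M\big(1-|\widehat\mu_4(s)|^2\big)\qquad\big(\text{resp. }M\big(1-|\widehat\mu_4(1-s)|^2\big)\big).
\]
Substituting this into the finite iteration from $v$ down to level $|w^\ast|$ and bounding all other terms by $M$, I obtain $g_v(t)\le M\big(1-W_{w^\ast}(t)\,|\widehat\mu_4(s_{w^\ast}(t))|^2\big)$, where $W_{w^\ast}(t)$ is the product of the branch weights along $\omega$ and $s_{w^\ast}(t)=(t+[w^\ast])/4^{|w^\ast|}\in[0,1]$.

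Everything then reduces to a uniform lower bound $W_{w^\ast}(t)\,|\widehat\mu_4(s_{w^\ast}(t))|^2\ge\eta(P,Q)>0$, and I expect this to be the main obstacle. This is exactly where Definition \ref{defsp1} is used: a long run of the digit $0$ contracts the argument toward $0$, so its $\cos^2$ factors form a convergent product bounded below (again an $|\widehat\mu_4|^2$-type product) and are harmless despite being unboundedly many; the fewer-than-$P$ occurrences of the digit $2$ contribute at most $P$ factors, each bounded below by a fixed constant; and the length-$\le Q$ tail contributes at most $Q$ further factors. The delicate point is to keep all intermediate arguments and the endpoint $s_{w^\ast}(t)$ away from the zeros of $\cos(\tfrac{\pi t}{2})$, $\sin(\tfrac{\pi t}{2})$ and of $\widehat\mu_4$ (i.e.\ away from the integers $0,1$); the parameter values where a single good path degenerates must be absorbed, e.g.\ by re-selecting good paths at the offending intermediate vertices and redistributing weight through the averaging identity. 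This uniform estimate, valid over all vertices $v$, is the technical heart. Granting it, $g_v(t)\le(1-\eta)M$ for all $v$ and $t\in[0,1]$, so taking the supremum gives $M\le(1-\eta)M$, hence $M=0$; thus $g_\emptyset\equiv0$ on $[0,1]$, i.e.\ $h\equiv1$ there, and $\Lambda(\mathcal L)$ is a spectrum for $\mu_4$.
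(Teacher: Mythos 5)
Your reduction is sound and, up to reorganization, coincides with the paper's: the averaging identity for $g_v$ is the paper's \eqref{eqsp4}--\eqref{eqsp5}, and your contraction scheme (with the nice equicontinuity bound $|h_v'|\le 4\pi$ from Bessel, which the paper does not use) plays the role of Lemma \ref{lemsp4}. In fact, unwinding your key quantity shows that
\[
W_{w^\ast}(t)\,\bigl|\widehat\mu_4\bigl(s_{w^\ast}(t)\bigr)\bigr|^2=\prod_{j=1}^{m}\cos^2\Bigl(\tfrac{2\pi(t+\lambda_\omega)}{4^j}\Bigr)\cdot\prod_{j=m+1}^{\infty}\cos^2\Bigl(\tfrac{2\pi(t+\lambda_\omega)}{4^j}\Bigr)=|\widehat\mu_4(t+\lambda_\omega)|^2,
\]
where $\lambda_\omega$ is the integer read off the chosen terminating path. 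So the ``uniform lower bound'' you defer is exactly the hypothesis of the paper's Lemma \ref{lemsp4}: for every $t$ and every vertex there must be a terminating path $\lambda$ from that vertex with $|\widehat\mu_4(t+\lambda)|^2\ge\eta(P,Q)$. That estimate is the mathematical core of the theorem --- it is the only place the $(P,Q)$-good hypothesis enters --- and your proposal does not prove it.

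The gap is genuine because the heuristic you offer for it fails as stated. For $t$ near $1$, the very first factor of any path beginning with an even digit is $\cos^2(\pi t/2)\approx 0$, so ``a long run of $0$'s is harmless'' and ``each $2$ contributes a factor bounded below by a fixed constant'' are both false without first steering $t$ away from $1$; more generally every intermediate argument must be kept uniformly away from the zeros of $\widehat\mu_4$. The paper handles this with two ingredients you would need to supply: (a) a case split on $y\in[-\tfrac14,\tfrac34]$ versus $y\in[\tfrac34,\tfrac54]$, in the second case following odd branches until the first label $1$ (or ending at $\underline 3$, i.e.\ $\lambda=-1$), which replaces $y$ by $y-1$ near $0$ before the good path is invoked; and (b) the quantitative Lemma \ref{lemsp5}, which shows via the submultiplicative inequality $|\widehat\mu_4(x+4^nk)|^2\ge|\widehat\mu_4(x)|^2|\widehat\mu_4(\frac{x}{4^n}+k)|^2$ and a compactness argument on the set $[-1+\epsilon_1,1-\epsilon_1]+\{2+4k:|k|\le 4^Q\}$ that after the first digit $2$ all subsequent arguments stay in a fixed compact set avoiding the zeros, yielding a bound $\delta_1^{P}$. ``Re-selecting good paths at offending vertices and redistributing weight'' gestures at (a) but does not carry it out, and nothing in the proposal addresses (b). Until these are done, the contraction $M\le(1-\eta)M$ has no $\eta$.
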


 We divide the proof into several lemmas.

\begin{lemma}[\cite{JoPe98}]\label{lemsp3}
Let $\Lambda$ be a set such that $\{e_\lambda\,|\,\lambda\in\Lambda\}$ is an orthonormal family in $L^2(\mu_4)$. Then 
\begin{equation}\label{eqsp1}
\sum_{\lambda\in\Lambda}|\widehat\mu_4(t+\lambda)|^2\leq 1\quad(t\in\br).
\end{equation}
The set $\Lambda$ is a spectrum for $\mu_4$ iff
\begin{equation}\label{eqsp2}
\sum_{\lambda\in\Lambda}|\muf(t+\lambda)|^2=1\quad(t\in\br).
\end{equation}
\end{lemma}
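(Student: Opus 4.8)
The plan is to recognize both assertions as consequences of Bessel's inequality together with the totality of the exponentials in $L^2(\mu_4)$. Let $V$ denote the closed linear span of $\{e_\lambda\mid\lambda\in\Lambda\}$ and let $P$ be the orthogonal projection of $L^2(\mu_4)$ onto $V$. Since $\{e_\lambda\}_{\lambda\in\Lambda}$ is an orthonormal family, it is an orthonormal basis of $V$, so for every $f\in L^2(\mu_4)$ one has $Pf=\sum_{\lambda\in\Lambda}\ip{f}{e_\lambda}e_\lambda$ and hence $\norm{Pf}^2=\sum_{\lambda\in\Lambda}|\ip{f}{e_\lambda}|^2$.

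First I would rewrite the sum in \eqref{eqsp1} in terms of these inner products. A direct computation gives $\ip{e_{-t}}{e_\lambda}=\int e^{2\pi i(-t-\lambda)x}\,d\mu_4(x)=\widehat\mu_4(-(t+\lambda))$, and since $\mu_4$ is a positive probability measure we have $\widehat\mu_4(-s)=\overline{\widehat\mu_4(s)}$, so $|\ip{e_{-t}}{e_\lambda}|=|\widehat\mu_4(t+\lambda)|$. Therefore
$$\sum_{\lambda\in\Lambda}|\widehat\mu_4(t+\lambda)|^2=\sum_{\lambda\in\Lambda}|\ip{e_{-t}}{e_\lambda}|^2=\norm{Pe_{-t}}^2.$$
Because $\mu_4$ is a probability measure, $\norm{e_{-t}}^2=\int 1\,d\mu_4=1$, and since $P$ is a projection $\norm{Pe_{-t}}\le\norm{e_{-t}}=1$, which is exactly \eqref{eqsp1}. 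Moreover $\norm{Pe_{-t}}=\norm{e_{-t}}$ holds if and only if $e_{-t}\in V$. Hence \eqref{eqsp2} holds for every $t\in\br$ if and only if $e_{-t}\in V$ for every $t$, equivalently $V$ contains every exponential $e_s$, $s\in\br$.

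It remains to connect this last condition with $\Lambda$ being a spectrum. If $\Lambda$ is a spectrum then $V=L^2(\mu_4)$ contains all exponentials, giving \eqref{eqsp2}. For the converse the key input is that the finite linear combinations of $\{e_s\mid s\in\br\}$ are dense in $L^2(\mu_4)$: they form a unital self-adjoint algebra (closed under $e_s\mapsto\overline{e_s}=e_{-s}$) that separates the points of the compact support $X_4$, so by the Stone--Weierstrass theorem they are uniformly dense in $C(X_4)$ and therefore dense in $L^2(\mu_4)$. Consequently, if $V$ contains every exponential then $V=L^2(\mu_4)$, so $\{e_\lambda\mid\lambda\in\Lambda\}$ is an orthonormal basis and $\Lambda$ is a spectrum.

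The Fourier-transform computation and the appeal to Bessel's inequality are routine; the one step carrying genuine content is the totality of the exponentials, which I expect to be the main point to pin down, and which rests on the compactness of $X_4$ together with Stone--Weierstrass.
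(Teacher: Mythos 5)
Your argument is correct and follows essentially the same route as the paper: identify the sum with $\norm{Pe_{-t}}^2$ for the projection $P$ onto the closed span, apply Parseval/Bessel, and use Stone--Weierstrass on the compact support to pass from containing all exponentials to spanning $L^2(\mu_4)$. Your write-up merely spells out the details (the conjugate symmetry of $\widehat\mu_4$ and the density argument) that the paper leaves implicit.
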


\begin{proof}
Let $\mathcal P$ be the projection onto the span of $\{e_\lambda\,|\,\lambda\in\Lambda\}$. Then, using Parseval's identity, we have for all $t\in\br$:
$$1\geq \|\mathcal Pe_{-t}\|^2=\sum_{\lambda\in\Lambda}|\ip{e_\lambda}{e_{-t}}|^2=\sum_{\lambda\in\Lambda}|\muf(t+\lambda)|^2.$$
This implies \eqref{eqsp1} and one of the $\Rightarrow$ part in the last statement. For the converse, if \eqref{eqsp2}
holds, then $e_{-t}$ is in the span of $\{e_\lambda\}_\lambda$, and using the Stone-Weiertrass theorem, this implies that the span is $L^2(\mu_4)$.
\end{proof}

\begin{lemma}\label{lemsp4}
Assume that there exist $\epsilon_0>0$ and $\delta_0>0$ such that for any $y\in[-\epsilon_0,1+\epsilon_0]$ and any vertex $v=d_0\dots d_{N-1}$ in the binary tree $\mathcal T(\mathcal L)$, there exists an infinite path $\lambda(d_0\dots d_{N-1})$ in the tree, starting from $v$, ending in $\underline 0$ or $\underline 3$, such that $|\muf(y+\lambda(d_0\dots d_{N-1}))|^2\geq\delta_0$. Then $\Lambda(\mathcal L)$ is a spectrum for $\mu_4$.
\end{lemma}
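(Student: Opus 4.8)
The plan is to invoke Lemma~\ref{lemsp3}: it suffices to show that $Q(t):=\sum_{\lambda\in\Lambda(\mathcal L)}|\muf(t+\lambda)|^2$ equals $1$ for every $t\in\br$, since then \eqref{eqsp2} holds. For each vertex $v=d_0\dots d_{N-1}$ let $\Lambda_v$ denote the set of integers read along the infinite paths of $\mathcal T(\mathcal L)$ that \emph{start} at $v$ (so $\Lambda_\emptyset=\Lambda(\mathcal L)$), and set $Q_v(s):=\sum_{\lambda\in\Lambda_v}|\muf(s+\lambda)|^2$. Each family $\{e_\lambda:\lambda\in\Lambda_v\}$ is orthonormal, because the orthogonality argument of Lemma~\ref{lemto} uses only property (i) of Definition~\ref{defma1}, which the subtree inherits; hence \eqref{eqsp1} gives $Q_v\le 1$, and $Q_v$ is continuous by the uniform convergence in Lemma~\ref{lemma4}. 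The engine is the convex self-similar recursion obtained by splitting $\Lambda_v$ according to the first edge-label $d$ (so $\lambda=d+4\lambda'$ with $\lambda'\in\Lambda_{vd}$) and applying $|\muf(t)|^2=\cos^2(2\pi t/4)\,|\muf(t/4)|^2$: one gets $Q_v(s)=\sum_{d}\cos^2\!\big(2\pi\tfrac{s+d}{4}\big)\,Q_{vd}\!\big(\tfrac{s+d}{4}\big)$, the sum over the two labels at $v$. Since those two labels have opposite parity, $\cos^2\!\big(2\pi\tfrac{s+d}{4}\big)+\cos^2\!\big(2\pi\tfrac{s+d'}{4}\big)=1$, so this is a genuine convex combination.

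Put $K:=[-\epsilon_0,1+\epsilon_0]$, which is forward-invariant under every child map $s\mapsto(s+d)/4$. The hypothesis, read off from a good path starting at $v$, says precisely that $Q_v(s)\ge\delta_0$ for all $s\in K$ and all vertices $v$. Define the supremal deficit $M:=\sup_{v,\,s\in K}\bigl(1-Q_v(s)\bigr)$, so $0\le M\le 1-\delta_0$; the goal is to prove $M=0$.

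The key observation is that a good path from $v$ forces both a definite retained weight and a vanishing deficit far down. Let $\lambda_\ast\in\Lambda_v$ be the good path with $|\muf(s+\lambda_\ast)|^2\ge\delta_0$. Iterating the recursion $j$ times gives $Q_v(s)=\sum_{w}W_w(s)\,Q_w(s_w)$ over the level-$j$ descendants $w$ of $v$, with $W_w\ge0$, $\sum_w W_w(s)=1$, and renormalized base points $s_w\in K$ by invariance. For the descendant $w^{(j)}$ lying along $\lambda_\ast$, the cumulative weight $W_{w^{(j)}}(s)$ is a partial product of cosines whose full product is $|\muf(s+\lambda_\ast)|^2\ge\delta_0$, hence $W_{w^{(j)}}(s)\ge\delta_0$. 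Moreover, as $j\to\infty$ the base point $s_{w^{(j)}}$ tends to $0$ (if $\lambda_\ast$ ends in $\underline0$) or to $1$ (if it ends in $\underline3$), by the computation of the tail value in Proposition~\ref{propba2}; since the corresponding constant-tail continuation ($0$, resp.\ $-1$) lies in $\Lambda_{w^{(j)}}$, we get $Q_{w^{(j)}}(s_{w^{(j)}})\ge|\muf(s_{w^{(j)}})|^2\to 1$, resp.\ $\ge|\muf(s_{w^{(j)}}-1)|^2\to 1$, so the deficit $\varepsilon_j:=1-Q_{w^{(j)}}(s_{w^{(j)}})\to0$. Bounding every other descendant's deficit by $M$ yields
\[
1-Q_v(s)=\sum_{w}W_w(s)\bigl(1-Q_w(s_w)\bigr)\le W_{w^{(j)}}\varepsilon_j+(1-W_{w^{(j)}})M\le \varepsilon_j+(1-\delta_0)M .
\]
Letting $j\to\infty$ gives $1-Q_v(s)\le(1-\delta_0)M$ for all $v$ and $s\in K$; taking the supremum gives $M\le(1-\delta_0)M$, whence $M=0$. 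Thus $Q_v\equiv1$ on $K$ for every $v$.

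Finally, to pass from $K$ to all of $\br$, fix $t$ and iterate the recursion to a level $N$ so large that $|t|/4^N\le\epsilon_0$; then every renormalized point $(t+c_v)/4^N$ (with $c_v\in\{0,\dots,4^N-1\}$) lies in $K$, so $Q(t)=\sum_{v}W_v(t)\,Q_v\!\big(\tfrac{t+c_v}{4^N}\big)=\sum_v W_v(t)\cdot 1=1$. Hence \eqref{eqsp2} holds and $\Lambda(\mathcal L)$ is a spectrum by Lemma~\ref{lemsp3}. I expect the one genuinely delicate point to be the pairing highlighted in the third paragraph: a naive use of the convexity gives only $1-Q_v(s)\le M$ and no contraction, and the argument succeeds only because the good path \emph{simultaneously} keeps a cumulative weight bounded below by $\delta_0$ and, deep enough, drives the local deficit to $0$ via the renormalized base point converging to $0$ or $1$.
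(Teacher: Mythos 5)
Your argument is correct, but it is not the route the paper takes, so let me compare. The paper's proof fixes $x$, uses the level-$N$ partition of unity \eqref{eqsp4}, extends each level-$N$ vertex along a good path to an element $\eta_x(d_0\dots d_{N-1})\in\Lambda(\mathcal L)$ satisfying $P_x^N\le\frac1{\delta_0}|\muf(x+\eta_x)|^2$, and then invokes the convergence of the Bessel series \eqref{eqsp1} to show that the total $P_x^N$-weight of vertices whose extension has $\lng\ge P_\epsilon$ is less than $\epsilon$; the surviving weight is carried by the finitely many $\lambda$ with $\lng(\lambda)<P_\epsilon$, for which $P_x^N\to|\muf(x+\lambda)|^2$, giving $\sum_\lambda|\muf(x+\lambda)|^2>1-2\epsilon$. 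You instead introduce the subtree sums $Q_v$, verify they are Bessel-bounded (correctly noting that Lemma~\ref{lemto} only uses the parity condition of Definition~\ref{defma1}, so it applies to every subtree), set up the exact two-term convex recursion, and run a contraction on the supremal deficit $M=\sup_{v,s\in K}(1-Q_v(s))$ over the forward-invariant interval $K$. The decisive estimate — that the descendant $w^{(j)}$ along the good path carries cumulative weight $W_{w^{(j)}}(s)=\prod_{i=1}^{j}\cos^2\bigl(2\pi(s+\lambda_*)/4^i\bigr)\ge|\muf(s+\lambda_*)|^2\ge\delta_0$ while its renormalized base point tends to $0$ or $1$ so that its local deficit $\varepsilon_j\to0$ — is sound, and $M\le(1-\delta_0)M$ does force $M=0$; the passage from $K$ to all of $\br$ by iterating $N$ levels is the same normalization step the paper performs with $Q_1$. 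What your approach buys is a cleaner fixed-point argument that avoids the paper's double bookkeeping with $P_\epsilon$, $Q_\epsilon$ and the interchange of the limits $N\to\infty$ and $\epsilon\to0$; what it costs is the extra (but easily discharged) verification that every $\Lambda_v$ is an orthogonal family so that $Q_v\le1$, which the paper never needs to state because it only ever sums over the single set $\Lambda(\mathcal L)$. Two small cosmetic points: the asserted continuity of $Q_v$ is never used (only continuity of $\muf$ at $0$ is, and that follows from Lemma~\ref{lemma4}), so you could drop the claim rather than justify it; and it is worth saying explicitly that $W_{w^{(j)}}(s)$ coincides with the partial product of $|\muf(s+\lambda_*)|^2$ because $\lambda_*\equiv\omega_0+\dots+4^{i-1}\omega_{i-1}\pmod{4^i}$ and $\cos^2(2\pi\,\cdot/4^i)$ is $4^i$-periodic.
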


The main idea of the proof of Lemma \ref{lemsp4} is the same as the one used in a characterization of orthonormal scaling functions in wavelet theory \cite{DGH00}, and is similar to the one used in the proof of Theorem 2.8 in \cite{Str00}. But since 0 is not always present in the branching at a vertex, the details are more complicated.  
\begin{proof}[Proof of Lemma \ref{lemsp4}]
With Theorem \ref{thma3} we know that $\{e_\lambda\,|\,\lambda\in\Lambda(\mathcal L)\}$ is an orthonormal family.
We need to check \eqref{eqsp2}. For a finite word $d_0\dots d_{N-1}$ with $d_0,\dots d_{N-1}\in\{0,1,2,3\}$, we write $d_0\dots d_{N-1}\in\Lambda(\mathcal L)$, if $d_0\dots d_{N-1}$ is the label of a vertex in the binary tree $\mathcal T(\mathcal L)$. 

For $d_0\dots d_{N-1}$ in $\Lambda(\mathcal L)$, let 
\begin{equation}\label{eqsp3}
P_x^N(d_0\dots d_{N-1}):=\prod_{j=1}^{N}\cos^2\left(\frac{2\pi(x+d_0+\dots +4^{N-1}d_{N-1})}{4^j}\right),\quad(x\in\br).
\end{equation}

We claim that for any $N\geq1$,
\begin{equation}\label{eqsp4}
\sum_{d_0\dots d_{N-1}\in\Lambda(\mathcal L)}P_x^N(d_0\dots d_{N-1})=1.
\end{equation}

For this, note that if $\{e,e'\}$ is any one of the following sets $\{0,1\}$, $\{0,3\}$, $\{1,2\}$, $\{2,3\}$, we have
\begin{equation}\label{eqsp5}
\cos^2\left(\frac{2\pi(x+e)}{4}\right)+\cos^2\left(\frac{2\pi(x+e')}{4}\right)=1,\quad(x\in\br).
\end{equation}
Then \eqref{eqsp4} follows from \eqref{eqsp5} by induction.

Next, fix $x\in\br$. Pick $Q_1$ such that for $N\geq Q_1$, $\frac{|x|}{4^N}\leq\epsilon_0$. Then for any $d_0\dots d_{N-1}\in\Lambda(\mathcal L)$, the point 
$y:=\frac{x+d_0+\dots+4^{N-1}d_{N-1}}{4^N}\in [-\epsilon_0,1+\epsilon_0]$. Therefore there exists a path $\lambda(d_0\dots d_{N-1})$ starting from the vertex $d_0\dots d_{N-1}$, ending in $\underline 0$ or $\underline 3$ with $|\muf(y+\lambda(d_0\dots d_{N-1}))|^2\geq \delta_0$. We have 
$$
P_x^N(d_0\dots d_{N-1})\leq\frac{1}{\delta_0}P_x^N(d_0\dots d_{N-1})\left|\muf\left(\frac{x+d_0+\dots+4^{N-1}d_{N-1}}{4^N}+\lambda(d_0\dots d_{N-1})\right)\right|^2$$
$$=\frac{1}{\delta_0}\prod_{j=1}^N\cos^2\left(\frac{2\pi(x+d_0+\dots+4^{N-1}d_{N-1}+4^N\lambda(d_0\dots d_{N-1}))}{4^j}\right)\times$$
$$ \prod_{j=1}^\infty\left|\cos^2\left(\frac{x+d_0+\dots+4^{N-1}d_{N-1}+4^{N}\lambda(d_0\dots d_{N-1})}{4^{N+j}}\right)\right|^2$$
$$=\frac{1}{\delta_0}\left|\muf\left(x+d_0+\dots+4^{N-1}d_{N-1}+4^N\lambda(d_0\dots d_{N-1})\right)\right|^2
=\frac{1}{\delta_0}|\muf(x+\eta_x(d_0\dots d_{N-1}))|^2,$$
where for all $d_0\dots d_{N-1}\in\Lambda(\mathcal L)$, we denote
$$\eta_x(d_0\dots d_{N-1}):=d_0+\dots+4^{N-1}d_{N-1}+4^N\lambda(d_0\dots d_{N-1})\in\Lambda(\mathcal L).$$
Note that the base 4 expansion of $\eta_x(d_0\dots d_{N-1})$ starts with $d_0\dots d_{N-1}$.

We claim that for any $\epsilon>0$ there exists $P_\epsilon$ and $Q_\epsilon$ such that 
\begin{equation}\label{eqsp6}
\sum_{\stackrel{d_0\dots d_{N-1}\in\Lambda(\mathcal L)}{\lng(\eta_x(d_0\dots d_{N-1}))\geq P_\epsilon}} P_x^N(d_0\dots d_{N-1})<\epsilon,\quad(N\geq Q_\epsilon).
\end{equation}

Fix $\epsilon>0$. Using \eqref{eqsp1}, there exists $P_\epsilon\geq Q_1=:Q_\epsilon$ such that 
$$\sum_{\lambda\in\Lambda(\mathcal L),\lng(\lambda)\geq P_\epsilon}|\muf(x+\lambda)|^2<\epsilon\delta_0.$$
Then, using the previous calculation, for $N\geq Q_\epsilon$,
$$\sum_{\stackrel{d_0\dots d_{N-1}\in\Lambda(\mathcal{L})}{\lng(\eta_x(d_0\dots d_{N-1}))\geq P_\epsilon}}P_x^N(d_0\dots d_{N-1})\leq\frac{1}{\delta_0}\sum_{\stackrel{d_0\dots d_{N-1}\in\Lambda(\mathcal{L})}{\lng(\eta_x(d_0\dots d_{N-1}))\geq P_\epsilon}}\left|\muf\left(x+\eta_x(d_0\dots d_{N-1})\right)\right|^2
$$$$\leq \frac{1}{\delta_0}\sum_{\lambda\in\Lambda(\mathcal L),\lng(\lambda)\geq P_\epsilon}|\muf(x+\lambda)|^2<\epsilon.$$
This proves \eqref{eqsp6}.

From \eqref{eqsp6} we get that for all $N\geq Q_\epsilon$, 
\begin{equation}\label{eqsp6_1}
\sum_{\stackrel{d_0\dots d_{N-1}\in\Lambda(\mathcal{L})}{\lng(\eta_x(d_0\dots d_{N-1}))< P_\epsilon}}P_x^N(d_0\dots d_{N-1})=
\sum_{d_0\dots d_{N-1}\in\Lambda(\mathcal{L})}P_x^N(d_0\dots d_{N-1})-
\sum_{\stackrel{d_0\dots d_{N-1}\in\Lambda(\mathcal{L})}{\lng(\eta_x(d_0\dots d_{N-1}))\geq P_\epsilon}}P_x^N(d_0\dots d_{N-1})\end{equation}
\begin{equation}\label{eqsp6_2}
\stackrel{\mbox{by \eqref{eqsp4}}}{=}1-\sum_{\stackrel{d_0\dots d_{N-1}\in\Lambda(\mathcal{L})}{\lng(\eta_x(d_0\dots d_{N-1}))\geq P_\epsilon}}P_x^N(d_0\dots d_{N-1})>1-\epsilon.
\end{equation}

We also have for all $\lambda=d_0d_1\dots\in\Lambda(\mathcal L)$, 
\begin{equation}\label{eqsp7}
|\muf(x+\lambda)|^2=\lim_{N\rightarrow\infty}P_x^N(d_0\dots d_{N-1}).
\end{equation}
To prove \eqref{eqsp7}, we consider two cases: if $\lambda$ ends in $\underline 0$, then $\lambda=d_0+\dots+4^{p-1}d_{p-1}$ for some $p\geq0$, $d_k=0$ for $k\geq p$, and for $N\geq p$,
$$P_x^N(d_0\dots d_{N-1})=\prod_{j=1}^N\cos^2\left(\frac{2\pi(x+\lambda)}{4^j}\right)\rightarrow|\muf(x+\lambda)|^2.$$
If $\lambda$ ends in $\underline 3$, then $\lambda=d_0+\dots 4^{p-1}d_{p-1}-4^p$, for some $p$, $d_k=3$ for $k\geq p$, and for $p\geq N$,
$$P_x^N(d_0\dots d_{N-1})=\prod_{j=1}^N\cos^2\left(\frac{2\pi(x+d_0+\dots+4^{p-1}d_{p-1}+4^p(3+\dots+3\cdot 4^{N-1-p}))}{4^j}\right)=$$
$$\prod_{j=1}^N\cos^2\left(\frac{2\pi(x+d_0+\dots+4^{p-1}d_{p-1}-4^{p}+4^N)}{4^j}\right)=\prod_{j=1}^N\cos^2\left(\frac{2\pi(x+\lambda)}{4^j}\right)\rightarrow|\muf(x+\lambda)|^2.$$
This proves \eqref{eqsp7}.

Now, any $\lambda\in\Lambda(\mathcal L)$ with $\lng(\lambda)<P_\epsilon$ has base 4 expansion of the form $\lambda=d_0\dots d_{P_\epsilon-1}\underline0$ or $\lambda=d_0\dots d_{P_\epsilon-1}\underline3$, with $d_0\dots d_{P_\epsilon}\in\Lambda(\mathcal L)$. Therefore there are at most $2^{P_\epsilon}\cdot 2=2^{P_\epsilon+1}$ such $\lambda$. With \eqref{eqsp7}, for each such $\lambda$ we can approximate $|\muf(x+\lambda)|^2$ by 
$P_x^N(d_0(\lambda)\dots d_{N-1}(\lambda))$, where $d_0(\lambda)d_1(\lambda)\dots$ is the base 4 expansion of $\lambda$.

Therefore, using \eqref{eqsp7}, there exists $N$ as large as we want, $N\geq Q_\epsilon$, such that 
\begin{equation}\label{eqsp8}
\sum_{\lambda\in\Lambda(\mathcal L),\lng(\lambda)<P_\epsilon}|\muf(x+\lambda)|^2 >\sum_{\lambda\in\Lambda(\mathcal L),\lng(\lambda)<P_\epsilon}P_x^N(d_0(\lambda)\dots d_{N-1}(\lambda))-\epsilon.
\end{equation}

But if $d_0\dots d_{N-1}\in\Lambda(\mathcal L)$ and $\eta:=\eta_x(d_0\dots d_{N-1})$ has length $\lng(\eta)<P_\epsilon$ then, the first $N$ digits of $\eta_x(d_0\dots d_{N-1})$ are $d_0(\eta)=d_0,\dots,d_{N-1}(\eta)=d_{N-1}$ and $\eta_x(d_0\dots d_{N-1})$ is an element of $\Lambda(\mathcal L)$ such that $\lng(\eta)<P_\epsilon$. Therefore
\begin{equation}\label{eqsp9}
\sum_{\stackrel{d_0\dots d_{N-1}\in\Lambda(\mathcal L)}{\lng(\eta_x(d_0\dots d_{N-1}))<P_\epsilon}}P_x^N(d_0\dots d_{N-1})\leq
\sum_{\lambda\in\Lambda(\mathcal L),\lng(\lambda)<P_\epsilon}P_x^N(d_0(\lambda)\dots d_{N-1}(\lambda)).
\end{equation}
From \eqref{eqsp9}, and \eqref{eqsp6_1}, \eqref{eqsp6_2} we get 

\begin{equation}\label{eqsp10}
\sum_{\lambda\in\Lambda(\mathcal L),\lng(\lambda)<P_\epsilon}P_x^N(d_0(\lambda)\dots d_{N-1}(\lambda))>1-\epsilon.
\end{equation}
Then using \eqref{eqsp8}, we have
$$\sum_{\lambda\in\Lambda(\mathcal L)}|\muf(x+\lambda)|^2\geq\sum_{\lambda\in\Lambda(\mathcal L),\lng(\lambda)<P_\epsilon}|\muf(x+\lambda)|^2>1-2\epsilon.$$
Since $\epsilon>0$ and $x\in\br$ are arbitrary, Lemma \ref{lemsp4} follows from Lemma \ref{lemsp3}.
\end{proof}

\begin{lemma}\label{lemsp5}
For each $P,Q\geq0$, there exists $\delta>0$ depending only on $P,Q$, such that for all $x\in[-\frac14,\frac34]$ and all $(P,Q)$-good paths $\omega$ of one of the forms $\omega=\underline 0$ or $\omega=0\dots 02d_0d_1\dots$, the following inequality holds
$$|\muf(x+\omega)|^2\geq\delta.$$
(Note that, unless it is $\underline0$, the path $\omega$ contains at least one $2$ after some zeros. The $2$ can be on the first position $2\dots$. Note also that the path does not have to be in the binary tree.)
\end{lemma}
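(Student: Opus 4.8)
The plan is to use the product formula from Lemma \ref{lemma4}, $|\muf(t)|^2=\prod_{j\ge 1}\cos^2(2\pi t/4^j)$, together with the scaling relation $|\muf(t)|^2=\cos^2(2\pi t/4)\,|\muf(t/4)|^2$ that it encodes, in order to ``peel off'' the low-order digits of $\omega$ one block at a time. The key preliminary observation is that $[-\tfrac14,\tfrac34]$ is disjoint from the zero set $\mathcal Z=\{4^j(2k+1)\}$ (for $j=0$ the odd integers miss $[-\tfrac14,\tfrac34]$, and for $j\ge 1$ every nonzero element has absolute value $\ge 4$); hence by continuity and compactness $c_0:=\min_{x\in[-\frac14,\frac34]}|\muf(x)|^2>0$. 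This constant absorbs every block of leading zeros.

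I would isolate two reduction steps. First, if $\omega=4^p\omega'$ (i.e.\ $\omega$ begins with $p$ zeros), then $\omega/4^j\in\bz$ for $1\le j\le p$, so $\cos^2(2\pi(x+\omega)/4^j)=\cos^2(2\pi x/4^j)$, while the remaining factors reassemble into $|\muf(x/4^p+\omega')|^2$; thus
$$|\muf(x+\omega)|^2=\Big(\prod_{j=1}^p\cos^2(2\pi x/4^j)\Big)|\muf(x/4^p+\omega')|^2\ge |\muf(x)|^2\,|\muf(x/4^p+\omega')|^2\ge c_0\,|\muf(x/4^p+\omega')|^2,$$
where the first inequality holds because the dropped factors are $\le 1$, and $x/4^p$ still lies in $[-\tfrac14,\tfrac34]$. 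Second, if the current path begins with the digit $2$, say $\omega'=2+4\omega''$, then peeling one factor gives $\cos^2(2\pi(y+\omega')/4)=\cos^2(\pi(y+2)/2)=\cos^2(\pi y/2)\ge\cos^2(3\pi/8)=:c_1>0$ for $y\in[-\tfrac14,\tfrac34]$, and
$$|\muf(y+\omega')|^2=\cos^2(\pi y/2)\,|\muf(z+\omega'')|^2\ge c_1\,|\muf(z+\omega'')|^2,\qquad z:=\tfrac{y+2}{4}\in[\tfrac{7}{16},\tfrac{11}{16}].$$
The crucial feature is that peeling a $2$ forces the new base point $z$ into $[\tfrac{7}{16},\tfrac{11}{16}]$, an interval containing no integer.

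Next I would iterate: starting from $x$ and $\omega$, alternately strip the leading zeros (losing a factor $c_0$) and the next digit $2$ (losing a factor $c_1$, and resetting the base point into $[\tfrac7{16},\tfrac{11}{16}]$). Because $\omega$ is $(P,Q)$-good of the prescribed form, its significant low-order digits consist of at most $P-1$ twos, separated by runs of zeros, followed \emph{immediately after the last $2$} by the tail $d_0d_1\dots$ of length $\lng\le Q$. After removing all the twos we arrive at $|\muf(z^\ast+\tau)|^2$, where $\tau$ is the tail (an integer with $\lng(\tau)\le Q$) and $z^\ast\in[\tfrac7{16},\tfrac{11}{16}]$ is the base point produced by the \emph{final} $2$; the accumulated constant is at least $(c_0c_1)^{P}$. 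The tails with $\lng(\tau)\le Q$ form the finite set $\{-4^Q,\dots,4^Q-1\}$, and for each such $\tau$ the map $z\mapsto|\muf(z+\tau)|^2$ is continuous and strictly positive on $[\tfrac7{16},\tfrac{11}{16}]$, because $z+\tau\notin\bz\supseteq\mathcal Z$ (as $z$ is never an integer). Taking the minimum over this finite family and the compact interval yields $\delta_Q>0$, and $\delta:=(c_0c_1)^P\delta_Q$ works; the case $\omega=\underline0$ is just $|\muf(x)|^2\ge c_0$.

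The main obstacle, and the point where the special form of $\omega$ is essential, is exactly this last step: the bound survives only because the final base point $z^\ast$ is bounded away from $\bz$, which holds precisely because the tail follows immediately after a $2$. If the last $2$ were instead separated from the tail by a long run of zeros, stripping those zeros would drive the base point toward $0$, and for a tail $\tau\in\mathcal Z$ (e.g.\ $\tau=1$) the quantity $|\muf(z+\tau)|^2$ would tend to $0$ — no uniform bound could then hold, which is why such paths are not of the admissible form. I would also record that the whole argument is insensitive to whether $\omega$ ends in $\underline0$ or $\underline3$: the peeling identities are purely arithmetic in the integer $\omega$, and in the $\underline3$ case $\tau$ simply ranges over the negative part $\{-4^Q,\dots,-1\}$ of the same finite set.
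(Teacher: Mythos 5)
Your proof is correct and follows essentially the same route as the paper's: the inequality $|\muf(x+4^nk)|^2\ge|\muf(x)|^2\,|\muf(x/4^n+k)|^2$ used there is exactly your zero/two peeling step, and the paper likewise finishes with a continuity--compactness argument over the finite family of tails $2+4k$, $|k|\le 4^Q$, shifted by a fixed interval avoiding the zero set $\mathcal Z$. The only point worth flagging is that your reading of a $(P,Q)$-good path --- that the length-$\le Q$ tail begins immediately after the last $2$ --- is precisely the implicit reading on which the paper's decomposition $\omega=4^{n_1}2+\dots+4^{n_p}2+4^{n_p+1}k$ with $\lng(k)\le Q$ also depends, so the two arguments coincide there as well.
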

\begin{proof}
First we prove that for any $n,k\in\bz$, $n\geq0$,
\begin{equation}\label{eqsp11}
|\muf(x+4^nk)|^2\geq|\muf(x)|^2\left|\muf\left(\frac{x}{4^n}+k\right)\right|^2,\quad(x\in\br).
\end{equation}

If $n\geq 1$, we have
$$|\muf(x+4^nk)|^2=\cos^2\left(\frac{2\pi (x+4^nk)}{4}\right)\dots\cos^2\left(\frac{2\pi (x+4^nk)}{4^n}\right)\prod_{j=n+1}^\infty\cos^2\left(\frac{2\pi (x+4^nk)}{4^j}\right)=$$
$$\prod_{j=1}^n\cos^2\left(\frac{2\pi x}{4^j}\right)\prod_{j=1}^\infty\cos^2\left(\frac{2\pi (\frac{x}{4^n}+k)}{4^j}\right)\geq|\muf(x)|^2\left|\muf\left(\frac{x}{4^n}+k\right)\right|^2.$$
If $n=0$, then $|\muf(x+4^0k)|^2\geq|\muf(x)|^2|\muf(\frac{x}{4^0}+k)|^2$ simply because $|\muf(x)|\leq 1$. 
This proves \eqref{eqsp11}.

The function $|\muf|^2$ is continuous and its zeros are $\mathcal Z=\{4^j(2k+1)\,|\,j\geq 0, j,k\in\bz\}$ (see Lemma \ref{lemma5}). This implies in particular that 
$|\muf(4k+2)|^2\neq0$ for all $k\in\bz$. 

If an integer $a$ has base 4 expansion $a=a_0a_1\dots$ of length $\lng(a)\leq Q$ then $|a|\leq 4^Q$. Indeed, if $a=a_0\dots a_{Q-1}\underline0$, then 
$0\leq a=a_0+\dots+4^{Q-1}a_{Q-1}\leq 3+\dots + 4^{Q-1}3=4^Q-1$. If $a=a_0\dots a_{Q-1}\underline 3$, then $0\geq a=a_0+\dots +4^{Q-1}a_{Q-1}-4^Q\geq -4^Q$. 

Pick $\epsilon_1>0$ small (we will need $\epsilon_1<\frac{7}{48}$). The function $|\muf|^2$ is continuous and non-zero on the compact set
$$A:=\left[-1+\epsilon_1,1-\epsilon_1\right]+\left\{2+4k\,|\,|k|\leq 4^Q\right\}.$$
Therefore, there exists a $\delta_1>0$ such that 
\begin{equation}\label{eqsp12}
|\muf(y)|^2\geq\delta_1,\quad(y\in A).
\end{equation}

Take now $x\in[-\frac14,\frac34]$ and let $\omega$ be a $(P,Q)$-good path of the forms mentioned in the hypothesis. If $\omega=\underline0$ then 
$x+\omega=x\in A$ and $|\muf(x+\omega)|^2\geq\delta_1$. In the other case $\omega$ has the form: 
$$\omega=4^{n_1}2+\dots 4^{n_2}2+\dots+4^{n_p}2+4^{n_p+1}k,$$
where $0\leq n_1<\dots<n_p$, $1\leq p\leq P$ and $k$ is an integer with base 4 expansion of length $\leq Q$, so $|k|\leq 4^Q$. Using \eqref{eqsp11} we have, by induction:
$$|\muf(x+\omega)|^2\geq |\muf(x)|^2|\muf(\frac{x}{4^{n_1}}+2+4^{n_2-n_1}2+\dots+4^{n_p-n_1}2+4^{n_p+1-n_1}k)|^2\geq$$
$$|\muf(x)|^2|\muf(\frac{x}{4^{n_1}}+2)|^2|\muf(\frac{x}{4^{n_2}}+\frac{2}{4^{n_2-n_1}}+2+4^{n_3-n_2}2+\dots+4^{n_p-n_2}2+4^{n_p+1-n_2}k)|^2\geq$$
$$|\muf(x)|^2|\muf(\frac{x}{4^{n_1}}+2)|^2|\muf(\frac{x}{4^{n_2}}+\frac{2}{4^{n_2-n_1}}+2)|^2\dots
|\muf(\frac{x}{4^{n_{p-1}}}+\frac{2}{4^{n_{p-1}-n_1}}+\dots+\frac{2}{4^{n_{p-1}-n_{p-2}}}+2)|^2\times$$
$$
|\muf(\frac{x}{4^{n_p}}+\frac{2}{4^{n_p-n_1}}+\dots+\frac{2}{4^{n_p-n_p-1}}+2+4k)|^2.$$
We have, when $n_l\geq1$
$$-1+\epsilon_1<-\frac14\leq\frac{x}{4^{n_l}}+\frac{2}{4^{n_l-n_1}}+\dots+\frac{2}{4^{n_l-n_{l-1}}}\leq \frac{3}{16}+\frac24\frac{1}{1-\frac14}=\frac{41}{48}<1-\epsilon_1.$$
If $n_l=0$ then $l=1$ and $-1+\epsilon_1<\frac{x}{4^0}\leq\frac34<1-\epsilon_1.$ Thus we can use \eqref{eqsp12} on each term in the product above, and we obtain that 
$$|\muf(x+\omega)|^2\geq \delta_1^p\geq\delta_1^P.$$
This proves Lemma \ref{lemsp5}.
\end{proof}

\begin{proof}[Proof of Theorem \ref{thsp2}]

We will show that the conditions of Lemma \ref{lemsp4} are satisfied. 
Take $y\in[-\frac14,\frac54]$ and, take $d_0\dots d_{N-1}$ to be a vertex in the binary tree $\mathcal T(\mathcal L)$.

We distinguish two cases:

Case I: $y\in[-\frac14,\frac34]$. We will construct a path $\lambda$ in the tree starting from the vertex $d_0\dots d_{N-1}$. For this we follow the even-labeled branches until we reach the first $2$ (recall that exactly one of the branches from every vertex is labeled by 0 or 2). If we cannot find a 2, then this means that $\lambda=\underline0$ is a path in the tree from the vertex $d_0\dots d_{N-1}$, and with Lemma \ref{lemsp5}, we obtain $|\muf(y+\lambda)|^2=|\muf(y)|^2\geq\delta$. 

Suppose we can find a $2$ after finitely many steps from $d_0\dots d_{N-1}$. Then from the vertex $d_0\dots d_{N-1}0\dots02$, by hypothesis, we can find a $(P,Q)$-good path $\gamma$ in the tree. Then 
$\lambda:=0\dots02\gamma$ is a $(P+1,Q)$-good path in the tree from the vertex $d_0\dots d_{N-1}$. Then with Lemma \ref{lemsp5}, 
$|\muf(y+\lambda)|^2\geq\delta$.

Case II: $y\in[\frac34,\frac54]$. We will construct a path $\lambda$ from the vertex $d_0\dots d_{N-1}$. For this we follow the odd-labeled branches until we reach the first 1. If we cannot find a 1, then this means that $\lambda=\underline 3$ is a path in the tree from the vertex $d_0\dots d_{N-1}$; so $\lambda=-1$, and $y+\lambda=y-1\in[-\frac14,\frac14]$ so we get $|\muf(y+\lambda)|^2\geq\delta$.

If we can find a 1 after finitely may steps from $d_0\dots d_{N-1}$, then from the vertex $d_0\dots d_{N-1}3\dots 31$ there exists a $(P,Q)$-good path $\gamma$ in the tree. Then take $\lambda:=3\dots31\gamma$, with $p$ $3$s in the beginning. 
Then 
$$y+\lambda=y+3+4\cdot 3+\dots +4^{p-1}3+4^p1+4^{p+1}\gamma=y+4^p-1+4^p+4^{p+1}\gamma=y-1+4^p(2+4\gamma).$$
But then $y-1\in[-\frac14,\frac14]$ and $4^p(2+4\gamma)$ is a $(P+1,Q)$-good path (it is not a path in the tree but that does not matter), that contains at least a $2$ (on position $p$). Therefore, with Lemma \ref{lemsp5}, we get 
$|\muf(y+\lambda)|^2\geq\delta$. 

Thus the hypotheses of Lemma \ref{lemsp4} are satisfied and this implies that $\Lambda(\mathcal L)$ is a spectrum for $\mu_4$.
\end{proof}

As a special consequence of Theorem \ref{thsp2} we obtain the following corollary, which generalizes the results from \cite{JoPe98}, where the labels allowed were only $\{0,1\}$.

\begin{corollary}\label{corsp6}
Suppose $\mathcal L$ is a labeling of the binary tree such that for each vertex $v$ in the tree, the two edges that start from $v$ are labeled by either $\{0,1\}$ or $\{0,3\}$. Then $\Lambda(\mathcal L)$ is a spectrum for $\mu_4$. 
\end{corollary}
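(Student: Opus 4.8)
The plan is to deduce this directly from Theorem \ref{thsp2} by checking that $\mathcal L$ is a spectral labeling and that a single uniform choice of $(P,Q)$ produces good paths from every vertex. The crucial structural observation is that both admissible label pairs, $\{0,1\}$ and $\{0,3\}$, contain the digit $0$; hence at every vertex exactly one outgoing edge carries the label $0$.

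First I would verify that $\mathcal L$ satisfies the two conditions in Definition \ref{defma1}. Condition (i) is immediate, since in each of $\{0,1\}$ and $\{0,3\}$ the two labels have opposite parity. For condition (ii), I would observe that because every vertex has an outgoing $0$-edge, starting at any vertex $v$ and repeatedly following the $0$-edge produces an infinite path whose labels are all $0$; this is exactly a path ending in $\underline0$. Thus $\mathcal L$ is a genuine spectral labeling, so Theorem \ref{thma3} applies and $\Lambda(\mathcal L)$ is at least a maximal set of orthogonal exponentials.

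Next I would exhibit the good paths. Fix $P=1$ and $Q=0$. Given any vertex $v$, let $\omega=\underline0$ be the all-zeros path from $v$, which lies in the tree by the observation above. To see that $\omega$ is $(1,0)$-good, I take $n=0$ in Definition \ref{defsp1}: then $\omega_0=0\in\{0,2\}$ and the number of occurrences of $2$ among $\omega_0$ is $0<1=P$, while the tail $\omega_1\omega_2\dots=\underline0$ satisfies $\lng(\omega_1\omega_2\dots)=0\leq Q$. Hence from every vertex there is a $(1,0)$-good path, and Theorem \ref{thsp2} immediately yields that $\Lambda(\mathcal L)$ is a spectrum for $\mu_4$.

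There is essentially no hard step here: once one notices that the digit $0$ is always available, the constant-$0$ path serves simultaneously as the escape path required for condition (ii) of a spectral labeling and as a trivially good path for Theorem \ref{thsp2}. The only point that needs care is the bookkeeping in Definition \ref{defsp1}, namely that $P=1$ (strictly exceeding the zero occurrences of the digit $2$) and $Q=0$ (matching the zero length of the constant-$0$ tail) are legitimate choices; both are admissible since $P,Q\geq0$ is all that Theorem \ref{thsp2} requires.
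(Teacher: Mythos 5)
Your proof is correct and follows essentially the same route as the paper: the constant-$0$ path from every vertex witnesses both condition (ii) of Definition \ref{defma1} and the good-path hypothesis of Theorem \ref{thsp2}. Your bookkeeping with $P=1$, $Q=0$ is in fact slightly more scrupulous than the paper, which calls $\underline0$ a $(0,0)$-good path even though the strict inequality ``less than $P$'' in Definition \ref{defsp1} fails for $P=0$.
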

\begin{proof}
Clearly this is a spectral labeling because for each vertex the path $\underline 0$ starting at $v$ is in the tree.
This is also a $(0,0)$-good path, so the conditions of Theorem \ref{thsp2} are satisfied.
\end{proof}

\section{Other digits}\label{oth}
In this section, we consider the spectral labeling of the binary tree with other digits, not necessarily $\{0,1,2,3\}$. We show that a spectral labeling is a spectrum if the set of digits is uniformly bounded and the zero label is included at each vertex (partially improving a result in \cite{Str00}). Moreover, we provide the first counterexample for the fractal measure $\mu_4$ of a maximal set of orthogonal exponentials which is {\it not} a spectrum for $\mu_4$.  
\begin{definition}\label{defo1}
Suppose now we want to label the edges in the binary tree with other digits, not necessarily $\{0,1,2,3\}$. At each branching we use different digits, but we obey the rule that at each branching we can use only labels of the type $\{0,a\}$ where $a\in\bz$ is some odd number which varies from one branching to another. Thus, at the root we have a set $A_\ty$ of the form $\{0,a\}$ with $a\in\bz$ odd, and inductively, at each vertex $a_0\dots a_{k-1}$ with $a_0\in A_\ty,\dots,a_{k-1}\in A_{a_0\dots a_{k-2}}$, we have a set $A_{a_0\dots a_{k-1}}$ of the form $\{0,a(a_0,\dots,a_{k-1})\}$ with $a(a_0\dots,a_{k-1})\in\bz$ odd. 
We define the set 
\begin{equation}\label{eqo1}
\Lambda:=\left\{\sum_{k=0}^n 4^ka_k \,|\, a_0\in A_\ty,\dots, a_k\in A_{a_0\dots a_{k-1}}, n\geq0\right\}.
\end{equation}
\end{definition}

\begin{definition}\label{defo2}
Suppose the sets $A_\ty,\dots,A_{a_0\dots a_{k-1}}$ are given as in Definition \ref{defo1}. We say that an integer $\lambda$ has a {\it modified base 4 expansion with digits in $A$} if there exists an infinite sequence $a_0a_1\dots$ with the following properties 
\begin{enumerate}
\item $a_0\in A_\ty$, $a_k\in A_{a_0\dots a_{k-1}}$, for all $k\geq 1$;
\item $\sum_{k=0}^{n-1}a_k 4^k\equiv\lambda\mod 4^{n}$, for all $n\geq 0$.
\end{enumerate}
We call $a_0a_1\dots$ the {\it $A$-base 4 expansion} of $\lambda$. We denote by $\Lambda(A)$ the set of all integers that have a modified base 4 expansion with digits in $A$. 
\end{definition}

\begin{remark}\label{remo1}
The $A$-base 4 expansion is unique. Indeed if $a_0a_1\dots$ and $a_0'a_1'\dots$ are two $A$-base 4 expansions for the same integer $\lambda$, then if they are different, take $n$ to be the first index such that $a_n\neq a_n'$. Then $\sum_{k=0}^na_k4^k\equiv\lambda \equiv\sum_{k=0}^na_k'4^k\mod 4^{n+1}$, but this implies that $a_n\equiv a_n'$, a contradiction, since $a_n,a_n'\in A_{a_0\dots, a_{n-1}}$ and $a_n\neq a_n'$.

Note that if $\mathcal L$ is a spectral labeling and $\lambda\in\mathcal L$, then its base 4 expansion coincides with the $\mathcal L$-base 4 expansion.

\end{remark}

\begin{theorem}\label{tho1}
Consider the sets of digits $A$ as in Definition \ref{defo1}.
\begin{enumerate}
\item
For the set $\Lambda$ in \eqref{eqo1}, the exponentials $\{e_\lambda\,|\,\lambda\in\Lambda\}$ form an orthogonal family. 
There exists a unique spectral labeling $\mathcal L$ such that $\Lambda\subset\Lambda(\mathcal L)$. Moreover $\Lambda(\mathcal L)=\Lambda( A)$.
\item If the sets $A_{a_0\dots a_k}$ are uniformly bounded, then $\Lambda(\mathcal L)$ is a spectrum for $\mu_4$.
\end{enumerate}
\end{theorem}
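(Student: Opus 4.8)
The plan is to prove Theorem \ref{tho1} in two parts, exploiting the machinery already built for the standard digits $\{0,1,2,3\}$. For part (i), the orthogonality of $\{e_\lambda\,|\,\lambda\in\Lambda\}$ follows from Lemma \ref{lemma5}: given two distinct $\lambda,\lambda'\in\Lambda$ with $A$-base 4 expansions $a_0a_1\dots$ and $a_0'a_1'\dots$, let $k$ be the first index where they differ. Since at each vertex the labels form a set $\{0,a\}$ with $a$ odd, the two possible digits at position $k$ are $0$ and an odd number, so $a_k-a_k'$ is odd; then $\lambda-\lambda'=4^k(4(b-b')+a_k-a_k')\in 4^k(2\bz+1)\subset\mathcal Z$, giving orthogonality. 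For the spectral labeling, I would pass from the actual digits $a_k\in A$ to standard digits in $\{0,1,2,3\}$ by reducing mod $4$: define $\mathcal L$ to label each edge by $a_k\bmod 4$. Because $0\bmod 4=0$ and an odd $a$ reduces to an odd digit in $\{1,3\}$, the two edges at each vertex keep different parity, so condition (i) of Definition \ref{defma1} holds; and the all-zero path $\underline 0$ is always available from any vertex (since $0$ is always a label), so condition (ii) holds. One must check that this relabeling is consistent, i.e.\ that the vertex reached by the standard digits $(a_k\bmod 4)$ is well-defined and that $\Lambda(\mathcal L)=\Lambda(A)$; this uses Lemma \ref{lemba3}'s characterization of base 4 expansions via congruences, together with the uniqueness of the $A$-expansion noted in Remark \ref{remo1}. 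The inclusion $\Lambda\subset\Lambda(\mathcal L)$ and the uniqueness of $\mathcal L$ then fall out from Theorem \ref{thma3} since $\Lambda$ already generates a maximal orthogonal family once completed.

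For part (ii), the strategy is to verify the hypotheses of Lemma \ref{lemsp4} directly, rather than Theorem \ref{thsp2}, because the good-path combinatorics of Definition \ref{defsp1} were tailored to digits $\{0,1,2,3\}$. Let $M$ be the uniform bound on $|a|$ over all the sets $A_{a_0\dots a_{k-1}}$. Fix $y$ in a suitable interval and a vertex $v=d_0\dots d_{N-1}$ of $\mathcal T(\mathcal L)$. Since $0$ is always an available label, the path $\lambda=\underline 0$ from $v$ lies in the tree, so $v$ itself (followed by zeros) gives an element of $\Lambda(\mathcal L)$ of the form $\eta=d_0+\dots+4^{N-1}d_{N-1}$. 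The key point is that the completion to $\underline 0$ costs nothing in the infinite product beyond the contribution already present. I would then estimate $|\muf(y+\lambda)|^2$ from below uniformly: the tail of the product $\prod_{j>N}\cos^2(\cdot)$ converges to a quantity bounded below because the argument $(y+\eta)/4^{N+j}\to 0$, and the finitely many early factors are controlled using the boundedness of the digits. Concretely, I expect to reprove a uniform lower bound analogous to Lemma \ref{lemsp5}: by the product-splitting inequality \eqref{eqsp11} and continuity of $|\muf|^2$ on a compact set avoiding the zero set $\mathcal Z$, one gets $|\muf(y+\lambda)|^2\geq\delta_0$ for some $\delta_0>0$ depending only on $M$.

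The main obstacle is adapting Lemma \ref{lemsp5} to arbitrary uniformly-bounded odd digits: in the original argument the only nonzero even digit was $2$, which landed the shifted arguments $\frac{x}{4^{n_l}}+\frac{2}{4^{n_l-n_1}}+\dots$ inside a fixed compact set bounded away from $\mathcal Z$. With general digits $a\in A$, $|a|\leq M$, I must instead show that following the all-zero branch from any vertex keeps the relevant arguments in a compact set on which $|\muf|^2$ is bounded below. The cleanest route is to observe that from any vertex $v$, choosing the path $\underline 0$ makes $y+\lambda=y+\eta$ with $\eta$ a fixed integer, and then apply \eqref{eqsp11} with $\lambda$ replaced by this integer: $|\muf(y+\eta)|^2\geq|\muf(y)|^2\,|\muf(y/4^{N}+\text{(tail)})|^2$, where the tail is an integer of bounded size. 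Because $0\in A_v$ at every vertex, one never needs the delicate $2$-counting of the $(P,Q)$-good paths; the uniform bound $M$ replaces the role of $P,Q$ in guaranteeing the shifted arguments stay in a compact zero-free set. Establishing this uniform lower bound $\delta_0$, valid over all vertices simultaneously, is the crux; once it is in hand, Lemma \ref{lemsp4} applies verbatim and yields that $\Lambda(\mathcal L)$ is a spectrum for $\mu_4$.
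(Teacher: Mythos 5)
There are two genuine gaps. First, in part (i) your induced labeling is wrong: because of carries, the standard base-4 digit at level $k$ of $\lambda=\sum_j 4^j a_j$ is \emph{not} $a_k\bmod 4$. For instance with $A_\ty=\{0,15\}$ and $a_1=a_2=\dots=0$, the base 4 expansion of $15$ is $33\underline 0$, so the edge carrying the $A$-digit $0$ at level $1$ receives standard label $3$. Consequently your verification of condition (ii) of Definition \ref{defma1} (``the all-zero path is always available'') fails: following the $A$-digit $0$ from a vertex produces a standard path ending in $\underline 0$ \emph{or} $\underline 3$, not the path $\underline 0$. The correct statement, which the paper extracts from the congruences $\sum_{j\le k}4^jd_j\equiv\sum_{j\le k}4^ja_j\pmod{4^{k+1}}$, is that $d_k\equiv a_k-c\pmod 4$ with a carry $c$ depending only on the vertex; this is what saves the parity condition and forces the identification $\Lambda(\mathcal L)=\Lambda(A)$ to go through congruences rather than digitwise reduction. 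The paper also establishes uniqueness of $\mathcal L$ by showing the two-element sets $D(d_0\dots d_{n-1})$ are already determined by $\Lambda$ itself (comparing $\lambda'=\sum_{k=0}^{n-1}4^ka_k+4^na_n$ with $\lambda''$ built from the other digit of $A_{a_0\dots a_{n-1}}$), a step your sketch delegates to ``falls out from Theorem \ref{thma3}'' without argument.

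Second, part (ii) as you outline it would fail. You propose to take the path $\underline 0$ from every vertex and bound $|\muf(y+\lambda)|^2=|\muf(y)|^2$ below uniformly for $y\in[-\epsilon_0,1+\epsilon_0]$; but $\muf(1)=0$ and $1$ lies in that interval, so no such $\delta_0$ exists. This is precisely why the paper's proof of Theorem \ref{thsp2} splits into the cases $y\in[-\frac14,\frac34]$ and $y\in[\frac34,\frac54]$ and chooses \emph{different} paths (even-labelled branches to a $2$, or odd-labelled branches to a $1$) depending on where $y$ sits; you cannot sidestep this by invoking Lemma \ref{lemsp4} directly with a single choice of path. The paper's actual argument for (ii) is much shorter than what you anticipate: with $4^Q$ a bound on the digits, the element $\lambda'=\sum_{k=0}^{n-1}4^ka_k$ obtained by appending zeros satisfies $|\lambda'|\le 4^{Q+n}$, hence its standard expansion is $\underline 0$ or $\underline 3$ from position $Q+n$ on, so every vertex admits a $(0,Q)$-good path and Theorem \ref{thsp2} applies as a black box. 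You should reduce to Theorem \ref{thsp2} this way rather than reprove a variant of Lemma \ref{lemsp5}, whose ``crux'' you correctly identify but do not resolve.
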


\begin{proof}
To see that the exponential in $\{e_\lambda\}_{\lambda\in\Lambda}$ are orthogonal, take $\lambda=\sum_{k=0}^\infty {4^k a_k}$ ,$\lambda'=\sum_{k=0}^\infty 4^ka_k'$ in $\Lambda$, $\lambda\neq\lambda'$, $a_k,a_k'=0$ for $k$ large. Let $n$ be the first index such that $a_n\neq a_n'$. Then $\lambda-\lambda'=4^n((a_n-a_n')+4l)$ for some integer $l$. Since $a_n-a_n'$ is odd, we have $\muf(\lambda-\lambda')=0$ (with Lemma \ref{lemma5}). Therefore $e_\lambda\perp e_{\lambda'}$.

Using Zorn's lemma, there is a maximal set $\Lambda'$ of orthogonal exponentials such that $\Lambda\subset\Lambda'$. With Theorem \ref{thma3}, there exists a spectral labeling $\mathcal L$ such that $\Lambda(\mathcal L)=\Lambda'$. 
The key fact here is the uniqueness. We can {\it construct} the spectral labeling $\mathcal L$ as in the proof of Theorem \ref{thma3} and Lemma \ref{lemtwo}. 
We consider base 4 expansions of elements in $\Lambda$. We want to prove that, if we fix $d_0\dots d_{n-1}\in\{0,1,2,3\}$ then the set 
$$D(d_0\dots d_{n-1}):=\{d_n(\lambda)\,|\,\lambda\in\Lambda, d_0(\lambda)=d_0,\dots, d_{n-1}(\lambda)=d_n\}$$
will have 0 or 2 elements, and if it has 2, then they have different parity. 
Since $\Lambda\subset\Lambda'$ it is clear that this set can have at most $2$ elements, and if there are two then they have different parity. So it remains to prove only that it cannot have exactly one. 

Suppose the set contains at least one element. Then there exists $\lambda=\sum_{k=0}^\infty 4^ka_k$, with the digits $a_k$ in the sets $A$, such that the base 4 expansion of $\lambda$ starts with $d_0\dots d_{n-1}$. Take now $\lambda':=\sum_{k=0}^{n-1}4^ka_k+4^na_n$ and $\lambda''=\sum_{k=0}^{n-1}4^ka_k+4^na_n'$ where $a_n'$ is the other digit beside $a_n$ in $A_{a_0\dots a_{n-1}}=\{a_n,a_n'\}$. Since $\lambda-\lambda'$ and $\lambda-\lambda''$ are multiples of $4^n$ the base 4 expansions of $\lambda,\lambda',\lambda''$ will have the same first $n$ digits $d_0\dots d_{n-1}$. The $n+1$-st digits in the base 4 expansion of $\lambda$ and $\lambda'$ will be of different parity because $a_n-a_n'$ is odd. Thus 
$D(d_0\dots d_{n-1})$ has 0 or 2 elements of different parity and these are completely determined from the set $\Lambda$ (not just from the maximal one $\Lambda'$).

Then the construction of the spectral labeling $\mathcal L$ proceeds just as in the proof of Theorem \ref{thma3}.

Next, we prove that an integer $\lambda$ is in $\Lambda(\mathcal L)$ iff it has a modified base 4 expansion with digits in $A$. 
First, we have that an integer $\lambda$ with base 4 expansion $d_0d_1\dots $ is in the tree iff for all $n$, there exists $a_0,\dots, a_N$, $a_0\in A_\ty$, $a_k\in A_{a_0\dots a_{k-1}}$,  such that 
the base 4 expansion of $\sum_{k=0}^Na_k4^k$ begins with $d_0\dots d_{n-1}$. But this implies that 
$\sum_{k=0}^l4^kd_k\equiv \sum_{k=0}^l4^k a_k\mod 4^{l+1}$ for all $l\leq n-1$. In particular the digits $a_0\dots a_{n-1}$ are completely determined by the digits $d_0\dots d_{n-1}$, so they do not change if we increase $n$.

Thus, if $\lambda=d_0d_1\dots$ is in $\Lambda(\mathcal L)$, there exist $a_0,a_1,\dots$ from $A$, such that for all $n\geq0$,
$$\lambda\equiv\sum_{k=0}^n4^kd_k\equiv \sum_{k=0}^n4^ka_k\mod 4^{n+1}.$$
Therefore $\lambda$ is in $\Lambda(A)$. 

Conversely, let $\lambda$ be in $\Lambda(A)$, and let $d_0d_1\dots$ be its base 4 expansion. Then there exist $a_0,a_1,\dots$ from $A$ such that for all $n$.
$$\sum_{k=0}^n4^kd_k\equiv \lambda\equiv\sum_{k=0}^n 4^ka_k\mod 4^{n+1}.$$
This implies that the base 4 expansion of $\sum_{k=0}^n 4^ka_k$ begins with $d_0\dots d_n$ so $d_0\dots d_{n}$ is a label in the tree $\mathcal T(\mathcal L)$, and letting $n\rightarrow\infty$, we get that $\lambda$ is in $\Lambda(\mathcal L)$. This completes the proof of (i).

Next we prove (ii), i.e., if the sets $A_{a_0\dots a_k}$ are uniformly bounded then $\Lambda(\mathcal L)$ is a spectrum. 
We will check the conditions of Theorem \ref{thsp2}. 
Let $Q\geq 0$ such that all the digits $a_k$ used in $\Lambda$ satisfy $|a_k|\leq 4^Q$.

Take a vertex $d_0\dots d_{n-1}$ in the tree $d_i\in\{0,1,2,3\}$. This implies that there exists a $\lambda=\sum_{k=0}^\infty 4^ka_k$ in $\Lambda$, $a_k=0$ for $k$ large, such that the base 4 expansion of $\lambda$ starts with $d_0\dots d_{n-1}$. Take $\lambda':=\sum_{k=0}^{n-1}4^ka_k\in\Lambda$. Since $\lambda-\lambda'=4^nl$ for some integer $l$, the base 4 expansion of $\lambda'$ starts also with $d_0\dots d_{n-1}$. 
But $|\lambda'| \leq \sum_{k=0}^n|a_k|4^k\leq 4^Q\frac{4^n-1}{4-1}\leq 4^{Q+n}$. Therefore the base 4 expansion of $\lambda'$ will have $\underline 0$ or $\underline3$ from position $Q+n$ on. Thus, since $\lambda'\in\Lambda$, there exists a $(0,Q)$-good path in the tree that starts at the vertex $d_0\dots d_{n-1}$. With Theorem \ref{thsp2}, $\Lambda(\mathcal L)$ is a spectrum for $\mu_4$.

\end{proof}

\begin{remark}\label{remstr}
In \cite{Str00}, Strichartz analyzed the spectra of a more general class of measures. When restricted to our example, his results (Theorem 2.7 and 2.8 in \cite{Str00}) cover the case when all vertices at some level $n$ use the same digits $\{0,a_n\}$. In our notation, this means that $A_{a_0,\dots,a_{n-1}}=:A_n$ depends only on the length $n$, and not on the digits $a_0\dots a_{n-1}$. In \cite[Theorem 2.8]{Str00}, an extra condition is needed to guarantee that the set 
$$\Lambda=\left\{\sum_{k=0}^nb_k4^k\,|\, b_k\in\{0,a_k\}\,n\geq0\right\}$$
is a spectrum $\mu_4$. The condition requires the set $\frac{1}{4^{n}}A_0+\frac{1}{4^{n-1}}A_1+\dots+\frac{1}{4}A_{n-1}$ be separated from the zeroes of the function
$$\prod_{k=1}^n\cos^2\left(2\pi\frac{x}{4^k}\right)$$
uniformly in $k$.

Theorem \ref{tho1} improves this result by removing this extra condition. Even when the condition is not satisfied we still get a spectrum for $\mu_4$, namely $\Lambda(A)$, but this might be bigger than $\Lambda$. 
\end{remark}

\begin{example}
Let all the sets $A_{a_0\dots a_{k-1}}$ in Definition \ref{defo1} be equal to $\{0,3\}$. The results in \cite{Str00} do not apply (since $\sum_{k=0}^n\frac{3}{4^k}$ approaches $1$). Then the set 
$$\Lambda=\left\{\sum_{k=0}^n a_k 4^k\,|\, a_k\in \{0,3\}, n\geq0\right\},$$
will give an incomplete set of exponentials. To complete it one has to consider the set $\Lambda(A)$ which in this case 
$$\Lambda(A)=\Lambda\bigcup \left\{\sum_{k=0}^n a_k4^k-4^{n+1}\,|\, a_k\in\{0,3\},n\geq0\right\}.$$
The second part comes from the integers with base 4 expansion ending in $\underline3$. The set $\Lambda$ contains only those integers that have a base 4 expansion ending in $\underline0$. $\Lambda(A)$ is a spectrum, by Theorem \ref{tho1}(ii). The reason for the incompleteness of $\Lambda$ is that the integers are not read correctly (perhaps thoroughly is the better word) from the labels $A$.
\end{example}

\begin{example}
Suppose $A_\ty=\{0,15\}$ and $A_{a_0\dots a_{k-1}}=\{0,9\}$ for all $k\geq 1$. Then the set 
$$\Lambda:=\left\{\sum_{k=0}^na_k 4^k\,|\,a_0\in\{0,15\}, a_k\in\{0,9\}\mbox{ for }k\geq 1, n\geq0\right\},$$
does not give a {\it maximal} set of orthogonal exponentials. $e_3$ is perpendicular to all $e_\lambda$, $\lambda\in\Lambda$. Indeed
$3$ has $A$-base 4 expansion $15\,999\dots$, so $3\in\Lambda(A)$, and $\Lambda(A)$ is a spectrum by Theorem \ref{tho1}.
\end{example}

\begin{example}\label{exsp1}
In this example we construct a set of digits $A$ which will give a spectral labeling, which is not a spectrum. Thus we will have $\Lambda=\Lambda(A)=\Lambda(\mathcal L)$ but $\Lambda$ is not a spectrum. The reason for the incompleteness of $\{e_\lambda\,|\,\lambda\in\Lambda\}$ is thus more subtle, the set {\it is } a maximal set of orthogonal exponentials, but it does not span the entire $L^2(\mu_4)$.

Consider the following set
\begin{equation}\label{eqex1}
\Lambda:=\left\{\sum_{k=0}^N4^k(4^{10^{k+2}-k}+1)\delta_k\,|\, \delta_k\in\{0,1\},N\geq0\right\}.
\end{equation}

We will prove the following 
\begin{proposition}\label{propex2}
There exists a spectral labeling $\mathcal L$ such that $\Lambda(\mathcal L)=\Lambda$, so, by Theorem \ref{thma3} the set $\{e_\lambda\,|\, \lambda\in\Lambda\}$ forms a maximal family of orthogonal exponentials. Nonetheless $\Lambda$ is not a spectrum for $\mu_4$. 
\end{proposition}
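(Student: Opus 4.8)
The plan is to establish two separate facts: first, that $\Lambda$ defined in \eqref{eqex1} equals $\Lambda(\mathcal L)$ for some spectral labeling (so the exponentials form a maximal orthogonal family), and second, that $\Lambda$ fails to be a spectrum. For the first part I would invoke Theorem~\ref{tho1}(i) directly. Notice that the digits used are $a_k := 4^{10^{k+2}-k}+1$, and each such $a_k$ is odd (it is a power of $4$ plus $1$, and the exponent $10^{k+2}-k$ is positive for all $k\geq 0$). Thus the set $\Lambda$ in \eqref{eqex1} is exactly of the form \eqref{eqo1} with $A_{a_0\dots a_{k-1}} = \{0, a_k\}$ depending only on the level $k$. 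By Theorem~\ref{tho1}(i), there is a unique spectral labeling $\mathcal L$ with $\Lambda \subset \Lambda(\mathcal L)$ and $\Lambda(\mathcal L) = \Lambda(A)$. I would then check that in this case $\Lambda = \Lambda(A)$: the key point is that the exponents $10^{k+2}-k$ grow so fast that when one forms $\sum_{k=0}^N 4^k a_k \delta_k$, the nonzero contributions occupy widely separated blocks of base-$4$ digits, so reading off the $A$-base $4$ expansion recovers exactly the $\delta_k$. Hence no new integers are introduced and $\Lambda = \Lambda(A) = \Lambda(\mathcal L)$.

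The second and harder part is showing $\Lambda$ is \emph{not} a spectrum. Here the natural tool is Lemma~\ref{lemsp3}: it suffices to exhibit a single $t \in \br$ for which $\sum_{\lambda\in\Lambda}|\muf(t+\lambda)|^2 < 1$. I would aim to show the sum is strictly less than $1$ by analyzing $|\muf(t+\lambda)|^2 = \prod_{j=1}^\infty \cos^2\!\left(2\pi\frac{t+\lambda}{4^j}\right)$. The idea is that the digit $a_k = 4^{10^{k+2}-k}+1$ contributes a $1$ in the low-order position $4^k$ but also a spurious $1$ at the astronomically high position $4^{10^{k+2}}$. For an element $\lambda = \sum_{k=0}^N 4^k a_k\delta_k \in \Lambda$, the low-order part behaves like the Jorgensen--Pedersen spectrum $\{0,1\}$-integer $\sum 4^k\delta_k$, but the high-order ``tails'' $4^{10^{k+2}}\delta_k$ perturb the value of $\muf$ at scales $4^j$ with $j$ near $10^{k+2}$. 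The plan is to choose $t$ and estimate how much mass these perturbations bleed away, so that the total falls short of $1$.

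Concretely, I would compare $\Lambda$ to the genuine Jorgensen--Pedersen spectrum $\Lambda_0 = \{\sum_{k=0}^N 4^k\delta_k : \delta_k\in\{0,1\}\}$, for which $\sum_{\lambda\in\Lambda_0}|\muf(t+\lambda)|^2 \equiv 1$. Writing each $\lambda\in\Lambda$ as its $\Lambda_0$-counterpart plus a high-order error $\sum_k 4^{10^{k+2}}\delta_k$, I would track, for a fixed $t$, the multiplicative defect each error term introduces into the product $\prod_j \cos^2(\cdots)$. The factors of $\muf$ at the intermediate scales $j \in (k, 10^{k+2})$ are evaluated at arguments that are genuinely shifted by the presence of the high tail, and I expect at least one such factor to be bounded strictly below $1$ by a constant independent of the truncation $N$. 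Summing over $\lambda$ and using the fast separation of the blocks to keep the error terms from interfering, I would conclude $\sum_{\lambda\in\Lambda}|\muf(t+\lambda)|^2 \leq 1 - c < 1$ for some fixed $c>0$ and a well-chosen $t$.

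The main obstacle is the last estimate: quantifying the defect precisely. The subtlety is that each $\cos^2$ factor is close to $1$ individually, so one must show the accumulated loss does not vanish as $N\to\infty$, and one must verify that the high-order tails of distinct elements $\lambda$ do not conspire to restore the lost mass. I expect the extreme lacunarity built into the exponents $10^{k+2}-k$ (designed precisely so the blocks are separated by huge gaps) is what makes the errors effectively independent and the defect persistent; the bulk of the work will be choosing $t$ cleverly and bounding the infinite product at the critical scales, likely by isolating one well-separated scale $j\approx 10^{k+2}$ where a $\cos^2$ factor is pushed away from $1$ uniformly in $N$.
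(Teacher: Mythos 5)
Your first half is fine and takes a slightly different (arguably cleaner) route than the paper: the paper builds the spectral labeling by hand, explicitly describing the base-$4$ digits of $\sum_k(4^{10^{k+2}}+4^k)\delta_k$ and relabeling a $\{0,1\}$-tree to $\{1,2\}$ at the levels $10^{N+2}$, whereas you invoke Theorem \ref{tho1}(i) with the odd digits $a_k=4^{10^{k+2}-k}+1$. That works, but you still owe the verification that $\Lambda(A)=\Lambda$, i.e.\ that no integer admits an $A$-expansion with \emph{infinitely} many nonzero digits (compare the $\{0,3\}$ example, where $\Lambda(A)$ strictly contains $\Lambda$ because of expansions ending in $\underline 3$); here positivity and lacunarity of the digits do give this, since the partial sums $\sum_{k<n}4^ka_k\delta_k$ reduced mod $4^n$ grow without bound if infinitely many $\delta_k\neq0$.

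The second half has a genuine gap: your quantitative claim is too weak to close the argument. You propose that for each $\lambda$ "at least one factor is bounded strictly below $1$ by a constant independent of $N$." A uniform per-term defect $c_0<1$ does not suffice for the direct summation, because there are $2^N$ elements of $\Lambda$ with top index $N$ and the individual terms $|\muf(t+\lambda)|^2$ must be summable against that count; and your fallback, a pointwise comparison $|\muf(t+\lambda)|^2\leq c_0\,|\muf(t+\pi(\lambda))|^2$ with $\pi(\lambda)=\sum 4^k\delta_k\in\Lambda_0$, is never established and is delicate (it requires lower bounds on the Jorgensen--Pedersen product at the shifted arguments, where some factors can themselves be small). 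The mechanism the paper actually uses, and which you have not identified, is much stronger: take $t=1$ and observe that for $\lambda=\lambda(\delta_0\dots\delta_N)$ with $\delta_N=1$ the base-$4$ expansion of $1+\lambda$ has the digit $1$ in position $10^{N+2}$ preceded by at least $9\cdot10^{N+1}$ zeros, so that $\frac{1+\lambda}{4^{10^{N+2}+1}}$ is within $4^{-9\cdot10^{N+1}}$ of $\frac14$ modulo $\bz$ --- a zero of $\cos^2(2\pi\,\cdot\,)$. Hence the single factor at scale $j=10^{N+2}+1$ is at most $4\pi^2\,4^{-18\cdot10^{N+1}}$, i.e.\ super-exponentially small in $N$, not merely bounded away from $1$; this beats the multiplicity $2^N$ and gives $\sum_{\lambda}|\muf(1+\lambda)|^2\leq\sum_N 2^N\cdot 4\pi^2\,4^{-18\cdot10^{N+1}}<1$ (the $\lambda=0$ term vanishes since $\muf(1)=0$). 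Without locating this near-zero of a specific $\cos^2$ factor, the "accumulated defect" you describe cannot be shown to persist.
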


\begin{proof}

The elements in $\Lambda$ have the form 
\begin{equation}\label{eqex2}
\lambda=\sum_{k=0}^\infty (4^{10^{k+2}}+4^k)\delta_k,
\end{equation}
where $\delta_k\in\{0,1\}$ and $\delta_k=0$ for $k$ larger than some $N\geq0$.

Let $\lambda=d_0d_1\dots$ be the base $4$ expansion of this element. 
Since $\lambda\geq0$ the expansion ends in $\underline0$. Then, note that 
\begin{enumerate}
\item $d_k=1$ iff one of the following two conditions is satisfied:
\begin{itemize}
\item $k$ is not of the form $10^{n+2}$ and $\delta_k=1$;
\item $k=10^{n+2}$ for some $n\geq0$, and $\delta_n=1$ and $\delta_k=0$.
\end{itemize}
\item $d_k=2$ iff $k=10^{n+2}$ for some $n\geq0$, and $\delta_n=1$ and $\delta_k=1$.
\item $d_k=0$ in all other cases.
\end{enumerate}
We construct the spectral labeling $\mathcal L$ as follows: First, we consider the spectral labeling $\mathcal L_0$ where only the labels $\{0,1\}$ are used at each vertex. We build a new binary tree $\mathcal T(\mathcal L_0,\mathcal L)$ with a different kind of labeling. For the vertices we keep the labels from $\mathcal T(\mathcal L_0)$, but we label the edges differently. We will change the labeling $\{0,1\}$ to $\{1,2\}$ at certain vertices. This will be done in the following way: for all $N\geq0$ and for all vertices $\delta_0\dots \delta_{N}$ with $\delta_{N}=1$, in the subtree with root $\delta_0\dots \delta_N$ we will change the labeling at all vertices at level $10^{N+2}$ from $\{0,1\}$ to $\{1,2\}$. So, at a vertex $\delta_0\dots\delta_N\delta_{N+1}\dots\delta_{10^{N+2}-1}$, the edges are labeled $\{1,2\}$ instead of $\{0,1\}$.

The spectral labeling $\mathcal L$ is obtained by relabeling the vertices consistently with the labels of the edges. 

We have to check that $\Lambda(\mathcal L)=\Lambda$. If $\lambda=d_0d_1\dots\in\Lambda(\mathcal L)$, ending in $\underline0$, then we construct a sequence $\delta_0\delta_1\dots$ by reading the labels of the vertices in $\mathcal T(\mathcal L_0,\mathcal L)$ along $\lambda$. Then by construction 
$$\lambda=\sum_{k=0}^\infty 4^kd_k=\sum_{k=0}^\infty (4^{10^{k+2}}+4^k)\delta_k$$
so $\lambda\in\Lambda$. Conversely, if $\delta_0,\dots,\delta_N$ are in $\{0,1\}$ it is clear that the base 4 expansion of 
$\sum_{k=0}^N(4^{10^{k+2}}+4^k)\delta_k$ is in $\Lambda(\mathcal L)$.

The labeling $\mathcal L$ is a spectral labeling because one can end a path in $\underline0$: just follow the zeros in the labeling of the vertices in $\mathcal T(\mathcal L_0,\mathcal L)$.

Next we prove that $\Lambda$ is not a spectrum for $\mu_4$. We will show that 
\begin{equation}\label{eqex3}
\sum_{\lambda\in\Lambda}|\muf(1+\lambda)|^2<1.
\end{equation}

First, let $\lambda=\lambda(\delta_0\dots \delta_N):=\sum_{k=0}^N(4^{10^{k+2}}+4^k)\delta_k$, with $\delta_N=1$, and let $\lambda=d_0d_1\dots$ be the base 4 expansion. Then 
$d_{10^{N+2}}=1$ and $d_k=0$ for $k>10^{N+2}$. Since for $k<N$, we have $10^{k+2}\leq 10^{N+1}$, and $k<10^{N+1}$, we see that 
$d_k=0$ for $10^{N+1}<k<10^{N+2}$. Thus the base $4$ expansion of $\lambda$ ends with a $1$ on position $10^{N+2}$ and $9\cdot 10^{N+1}$ zeros before that.

We use the following notation: for $e_0e_1\dots e_n$, $.e_0e_1\dots e_n:=\frac{e_0}{4}+\dots+\frac{e_n}{4^n}$. Let $m(x):=\cos^2(2\pi x)$. 
Let the base $4$ expansion of $1+\lambda$ be $b_0b_1\dots$. Then $b_0=d_0+1$ and $b_n=d_n$ for all $n\geq 1$. Then $\frac{1+\lambda}{4}\equiv.b_0\mod\bz$, $\frac{1+\lambda}{4^2}\equiv.b_1b_0\mod\bz$
$\dots \frac{1+\lambda}{4^j}\equiv .b_{j-1}\dots b_0$. Since $m\leq1$ we have 
$$|\muf(1+\lambda)|^2=\prod_{j=1}^\infty m\left(\frac{1+\lambda}{4^j}\right)\leq m\left(\frac{1+\lambda}{4^{10^{N+2}+1}}\right).$$
But $\frac{1+\lambda}{4^{10^{N+2}+1}}\equiv y:=.b_{10^{N+2}}\dots b_{10^{N+1}}\dots b_0\mod\bz$. 
But we saw above that $b_{10^{N+2}}=a_{10^{N+2}}=1$ and $b_n=a_n=0$ for $10^{N+1}<n<10^{N+2}$. So $y-\frac14=y-.1$ has at least $9\cdot 10^{N+1}$ zeros after the decimal point.
Therefore $0\leq y-\frac14=y-.1\leq \frac{1}{4^{9\cdot 10^{N+1}}}$.
Then $$m(y)=\cos^2\left(2\pi\left(\frac14+\left(y-\frac{1}4\right)\right)\right)=\sin^2\left(2\pi\left(y-\frac14\right)\right)\leq 4\pi^2\left(y-\frac14\right)^2\leq \frac{4\pi^2}{4^{18\cdot 10^{N+1}}}.$$
Therefore 
$$|\muf(1+\lambda)|^2\leq m\left(\frac{1+\lambda}{4^{10^{N+1}}}\right)=m(y)\leq \frac{4\pi^2}{4^{18\cdot 10^{N+1}}}.$$
Then 
$$\sum_{\lambda\in\Lambda}|\muf(1+\lambda)|^2=\sum_{N=0}^\infty\sum_{\stackrel{\delta_0,\dots,\delta_{N-1}\in\{0,1\}}{\delta_N=1}}|\muf(1+\lambda(\delta_0\dots\delta_N))|^2\leq\sum_{N=0}^\infty 2^{N}\frac{4\pi^2}{4^{18\cdot 10^{N+1}}}<1.$$
 With Lemma \ref{lemsp3}, this shows that $\Lambda$ is not a spectrum for $\mu_4$.
  
\end{proof}
\end{example}

\section{Further remarks}\label{rem}
In this section we describe some basic properties of spectra for the measure $\mu_4$, and we give an example of a spectral labeling which generates a spectrum but does not satisfy the conditions of Theorem \ref{thsp2}.
\begin{proposition}\label{propr1}
\mbox{}
\begin{enumerate}
\item
If $\Lambda_1,\Lambda_2$ are spectra for $\mu_4$, $\Lambda_1,\Lambda_2\subset\bz$, and $e_1,e_2$ are two integers of different parity, then the set $\Lambda:=(4\Lambda_1+e_1)\cup (4\Lambda_2+e_2)$ is a spectrum for $\mu_4$.
\item
If $\Lambda$ is a spectrum for $\mu_4$, $\Lambda\subset\bz$, then there exist $\Lambda_1, \Lambda_2\subset\bz$ and $e_1,e_2$ integers of different parity such that 
\begin{equation}\label{eqr1}
\Lambda=(4\Lambda_1+e_1)\cup(4\Lambda_2+e_2).
\end{equation}
\end{enumerate}
Moreover, for any decomposition of $\Lambda$ as in \eqref{eqr1}, the sets $\Lambda_1,\Lambda_2$ are spectra for $\mu_4$.
\end{proposition}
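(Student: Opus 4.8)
The plan is to exploit the characterization of spectra via Lemma~\ref{lemsp3}, namely that a set $\Lambda\subset\bz$ is a spectrum for $\mu_4$ if and only if $\sum_{\lambda\in\Lambda}|\muf(t+\lambda)|^2=1$ for all $t\in\br$. The key tool is the factorization of the Fourier transform coming from the invariance equation \eqref{eqinv}: writing $\muf(t)=e^{2\pi it/4}\cos(2\pi t/4)\,\muf(t/4)$, one sees that for $\lambda=4\mu+e$ with $\mu\in\bz$ and $e$ an integer,
$$
|\muf(t+4\mu+e)|^2=\cos^2\!\left(\frac{2\pi(t+e)}{4}\right)\left|\muf\!\left(\frac{t+e}{4}+\mu\right)\right|^2.
$$
This is the identity that converts a statement about $\Lambda$ into two statements about $\Lambda_1$ and $\Lambda_2$ after grouping terms according to their residue mod $4$.

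For part (i) and the final ``Moreover'' claim, I would first assume $\Lambda_1,\Lambda_2$ are spectra, so $\sum_{\mu\in\Lambda_i}|\muf(s+\mu)|^2=1$ for all $s$ and $i=1,2$. Applying the factorization above to $\Lambda=(4\Lambda_1+e_1)\cup(4\Lambda_2+e_2)$ and grouping the sum over $\Lambda$ into the two residue classes gives
$$
\sum_{\lambda\in\Lambda}|\muf(t+\lambda)|^2
=\cos^2\!\left(\frac{2\pi(t+e_1)}{4}\right)\cdot 1
+\cos^2\!\left(\frac{2\pi(t+e_2)}{4}\right)\cdot 1.
$$
Since $e_1,e_2$ have different parity, the pair $\{e_1 \bmod 4, e_2 \bmod 4\}$ is one of $\{0,1\},\{0,3\},\{1,2\},\{2,3\}$, so by identity \eqref{eqsp5} the two cosine-squared terms sum to $1$ for every $t$. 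This establishes the Parseval identity \eqref{eqsp2} for $\Lambda$, hence $\Lambda$ is a spectrum by Lemma~\ref{lemsp3}. The same computation, read in reverse, handles the ``Moreover'' statement: if $\Lambda$ is known to be a spectrum and it is decomposed as in \eqref{eqr1}, then the single equation $\sum_{\lambda\in\Lambda}|\muf(t+\lambda)|^2=1$ splits into $\cos^2(\tfrac{2\pi(t+e_1)}{4})\,S_1(t)+\cos^2(\tfrac{2\pi(t+e_2)}{4})\,S_2(t)=1$ where $S_i(s):=\sum_{\mu\in\Lambda_i}|\muf(s+\mu)|^2$; combined with the a priori bounds $S_i\le 1$ from \eqref{eqsp1} and the fact that the two cosine terms themselves sum to $1$, this forces $S_1\equiv S_2\equiv 1$, so each $\Lambda_i$ is a spectrum.

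For part (ii), the content is that such a decomposition always exists. By Theorem~\ref{thma3} (or by translating $\Lambda$ so that $0\in\Lambda$), $\Lambda$ corresponds to a spectral labeling of the binary tree, and the two labels $\{e_1,e_2\}$ on the two edges leaving the root have different parity by property (1) of Definition~\ref{defma1}. I would set $\Lambda_i:=\{(\lambda-e_i)/4 : \lambda\in\Lambda,\ \lambda\equiv e_i \bmod 4\}$ for $i=1,2$; then $\Lambda=(4\Lambda_1+e_1)\cup(4\Lambda_2+e_2)$ is immediate from the construction, and the different-parity condition is exactly the root-edge parity condition. (One must also check $\Lambda_1,\Lambda_2\subset\bz$, which is automatic since $\lambda\equiv e_i\bmod 4$.) The existence of the decomposition then follows, and its spectral property follows from the ``Moreover'' clause already proved.

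The main obstacle, and the only genuinely delicate point, is the reverse direction in the ``Moreover'' step: one must rule out the possibility that the two subsums $S_1(t),S_2(t)$ compensate each other pointwise without each being identically $1$. The resolution is that \eqref{eqsp1} gives $S_i(t)\le 1$ for all $t$ independently (each $\Lambda_i$ gives an orthonormal family by the parity of its digits together with Lemma~\ref{lemma5}), so in the identity $\cos^2(\cdot)S_1+\cos^2(\cdot)S_2=1$ with nonnegative coefficients summing to $1$ and both $S_i\le 1$, equality can only hold if $S_i=1$ wherever its cosine coefficient is nonzero; a continuity/density argument over $t$ then upgrades this to $S_i\equiv 1$ everywhere. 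Verifying carefully that orthonormality of the families indexed by $\Lambda_1$ and $\Lambda_2$ holds (so that \eqref{eqsp1} applies to them) is the step I would be most careful about.
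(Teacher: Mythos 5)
Your proposal is correct and takes essentially the same route as the paper: part (i) and the ``Moreover'' clause via the factorization $|\widehat\mu_4(t+4\mu+e)|^2=\cos^2\bigl(2\pi\tfrac{t+e}{4}\bigr)\,|\widehat\mu_4(\tfrac{t+e}{4}+\mu)|^2$ combined with Lemma \ref{lemsp3} and the bound $S_i\le 1$, and part (ii) via the spectral labeling of Theorem \ref{thma3} with $\Lambda_i$ read off the two root subtrees (equivalently, the residue classes mod $4$). The delicate point you flag---passing from $S_i(t)=1$ for $t\notin\bz$ to all $t$---is resolved in the paper exactly by the continuity/density argument you sketch, namely that $S_i(t)=\|\mathcal P e_{-t}\|^2$ and $t\mapsto e_{-t}$ is continuous into $L^2(\mu_4)$.
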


\begin{proof} (i)
We use Lemma \ref{lemsp3}. We have for $x\in\br$, using Lemma \ref{lemma4}:
$$\sum_{i=1,2}\sum_{\lambda_i\in\Lambda_i}|\muf(x+4\lambda_i+e_i)|^2=\sum_{i=1,2}\sum_{\lambda_i\in\Lambda_i}\cos^2\left(2\pi\frac{x+e_i}4+\lambda_i\right)\left|\muf\left(\frac{x+e_i}4+\lambda_i\right)\right|^2=$$
$$\sum_{i=1,2}\cos^2\left(2\pi\frac{x+e_i}4\right)\sum_{\lambda_i\in\Lambda_i}\left|\muf\left(\frac{x+e_i}4+\lambda_i\right)\right|^2=\sum_{i=1,2}\cos^2\left(2\pi\frac{x+e_i}4\right)=1.$$
For the next to last equality we used the fact that $\Lambda_i$ are spectra and Lemma \ref{lemsp3}. For the last equality we used the fact that $e_1-e_2$ is odd.

(ii) We can assume that $0\in\Lambda$. Otherwise, we work with $\Lambda-\lambda_0$ for some $\lambda_0\in\Lambda$. Then, since $\Lambda$ is a spectrum, by Theorem \ref{thma3} there is a spectral labeling $\mathcal L$ of the binary tree.
Take $e_1$, $e_2$ to be the labels of the edges that start from the root $\ty$, and take $\Lambda_i$ to be the set of integers that correspond to infinite paths in the subtree with root $e_i$. Then it is clear that \eqref{eqr1} is satisfied.

Assume now that $\Lambda$ is decomposed as in \eqref{eqr1}. We want to prove that $\Lambda_1,\Lambda_2$ are spectra. A simple check, that uses Lemma \ref{lemma5}, shows that $\{e_\lambda\,|\,\lambda\in\Lambda_i\}$ is an orthonormal family, for both $i=1,2$. With Lemma \ref{lemsp3} and the computation above we have for all $x\in\br$,
$$1=\sum_{i=1,2}\cos^2\left(2\pi\frac{x+e_i}4\right)\sum_{\lambda_i\in\Lambda_i}\left|\muf\left(\frac{x+e_i}4+\lambda_i\right)\right|^2=:\sum_{i=1,2}\cos^2\left(2\pi\frac{x+e_i}4\right)h_{\Lambda_i}\left(\frac{x+e_i}4+\lambda_i\right).$$

Take now $x\not\in\bz$. From Lemma \ref{lemsp3}, we have $h_{\Lambda_i}\left(\frac{x+e_i}4+\lambda_i\right)\leq 1$. Also $\cos^2\left(2\pi\frac{x+e_i}4\right)\neq0$ for $i=1,2$. If 
$h_{\Lambda_i}\left(\frac{x+e_i}4+\lambda_i\right)<1$ for one of the $i$'s, then this would contradict the equality above. 
Thus $h_{\Lambda_i}\left(\frac{x+e_i}4+\lambda_i\right)=1$ for all $x\not\in\bz$, $i=1,2$. But as in the proof of Lemma \ref{lemsp3}, this implies that $e_{-x}$ is in the span of $\{e_\lambda\,|\,\lambda\in\Lambda_i\}$ for all  $x\not\in\bz$, and since $e_n$ can be approximated uniformly by $e_x$ with $x\not\in\bz$, it follows that $e_n$ is also spaned by exponentials in $\Lambda_i$. Then as in the proof of Lemma \ref{lemsp3}, it follows that $\Lambda_i$ is a spectrum.
\end{proof}

\begin{remark}\label{remr1}
Suppose $\Lambda_1$ and $\Lambda_2$ are spectra, containing $0$. Let $\mathcal T(\mathcal L_1)$ and $\mathcal T(\mathcal L_2)$ be the spectral labelings of the binary tree that correspond to $\Lambda_1$ and $\Lambda_2$ as in Theorem \ref{thma3}. Let $\{e_1,e_2\}$ be a pair of digits of different parity $e_1,e_2\in\{0,1,2,3\}$. By Proposition \ref{propr1}, $(4\Lambda_1+e_1)\cup(4\Lambda_2+e_2)$ is a new spectrum of $\mu_4$. The corresponding spectral labeling can be obtained by labeling the first two edges, the ones from $\ty$, by $e_1$ and $e_2$, and labeling the edges in the subtree with root $e_1$ using $\mathcal L_1$, and the edges in the subtree with root $e_2$ using $\mathcal L_2$.

Applying Proposition \ref{propr1} several times, we see that the spectral property is a ``tail'' property: it does not depend on the labeling of the first few edges. In other words, if all the subtrees, from some level on, correspond to spectra, then the entire tree will correspond to a spectrum.
\end{remark}

\begin{proposition}\label{propr2}
Let $\mathcal L$ be a spectral labeling. For each vertex $d_0\dots d_{n-1}$, let $\mathcal L_{d_0\dots d_{n-1}}$ be the spectral labeling obtained by reading the labels in the subtree with root $d_0\dots d_{n-1}$. Suppose there exists a finite set $\mathcal S$ of paths in the binary tree $\mathcal T(\mathcal L)$, that start at the root $\ty$, and that satisfy the following conditions:
\begin{enumerate}
\item The paths do not end in $\underline0$ or $\underline3$;
\item For any vertex $d_0\dots d_{n-1}$ that does not lie on any of the paths in $\mathcal S$, the spectral labeling $\mathcal L_{d_0\dots d_{n-1}}$ gives a spectrum, i.e. $\Lambda(\mathcal L_{d_0\dots d_{n-1}})$ is a spectrum.
\end{enumerate}
Then $\Lambda(\mathcal L)$ is a spectrum.
\end{proposition}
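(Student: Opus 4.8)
The plan is to verify the spectrum criterion of Lemma~\ref{lemsp3}: since Theorem~\ref{thma3} already guarantees that $\{e_\lambda\,|\,\lambda\in\Lambda(\mathcal L)\}$ is orthonormal, we have $h(x):=\sum_{\lambda\in\Lambda(\mathcal L)}|\muf(x+\lambda)|^2\le 1$ automatically, and it remains only to show $h(x)=1$ for every $x\in\br$. The engine is the level-$n$ factorization already used in the proof of Lemma~\ref{lemsp4}. For a vertex $v=d_0\dots d_{n-1}$, identified with the integer $\sum_{k=0}^{n-1}4^kd_k$, set $P_x^n(v)=\prod_{j=1}^n\cos^2(2\pi(x+v)/4^j)$ as in \eqref{eqsp3}, and write $\Lambda_v:=\Lambda(\mathcal L_{d_0\dots d_{n-1}})$. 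Partitioning each $\lambda\in\Lambda(\mathcal L)$ according to its length-$n$ prefix $v$ and its tail $\mu\in\Lambda_v$ (so $\lambda=v+4^n\mu$), the identity from the proof of Lemma~\ref{lemsp4} gives
$$h(x)=\sum_{v\text{ at level }n}P_x^n(v)\,h_{\Lambda_v}\!\left(\frac{x+v}{4^n}\right),\qquad\sum_{v\text{ at level }n}P_x^n(v)=1,$$
where $h_{\Lambda_v}(y):=\sum_{\mu\in\Lambda_v}|\muf(y+\mu)|^2$ and the second identity is \eqref{eqsp4}.

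Next I would split the level-$n$ vertices into the ``bad'' ones that lie on a path of $\mathcal S$ and the ``good'' ones that do not. By hypothesis (ii) each good $v$ has $\Lambda_v$ a spectrum, so $h_{\Lambda_v}\equiv1$ by Lemma~\ref{lemsp3}, while every $h_{\Lambda_v}\le1$. Discarding the bad terms and using $\sum_vP_x^n(v)=1$,
$$h(x)\ \ge\ \sum_{v\text{ good}}P_x^n(v)\ =\ 1-\sum_{v\text{ bad}}P_x^n(v).$$
Because $\mathcal S$ is finite, at each level $n$ there are at most $|\mathcal S|$ bad vertices, each being the length-$n$ prefix $v_n$ of some path $s\in\mathcal S$. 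Thus it suffices to prove that for each individual path $s=d_0d_1\dots\in\mathcal S$ one has $P_x^n(v_n)\to0$ as $n\to\infty$ (for fixed $x$); summing the finitely many vanishing terms and letting $n\to\infty$ then yields $h(x)\ge1$, hence $h(x)=1$, and Lemma~\ref{lemsp3} finishes the proof.

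The crux, and the only place where condition (i) is used, is this vanishing claim. Reducing $(x+v_n)/4^j$ modulo $\bz$ shows $P_x^n(v_n)=\prod_{j=1}^n m(x/4^j+t_j)$, where $m(y)=\cos^2(2\pi y)$ and $t_j:=.\,d_{j-1}d_{j-2}\dots d_0$ (base $4$) depends only on $d_0\dots d_{j-1}$ and satisfies the recursion $t_{j+1}=(d_j+t_j)/4\in[0,1)$. Since the factors lie in $[0,1]$, the infinite product vanishes iff $\sum_j(1-m(x/4^j+t_j))=\sum_j\sin^2(2\pi(x/4^j+t_j))=\infty$; as $x/4^j\to0$ this is controlled by $\mathrm{dist}(t_j,\tfrac12\bz)$, the distance to the zero set of $\sin^2(2\pi\cdot)$. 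The recursion does the work: a $t_j$ near $\tfrac12$ sends $t_{j+1}$ to within a neighbourhood of $\{\tfrac18,\tfrac38,\tfrac58,\tfrac78\}$, all at distance $\ge\tfrac18$ from $\tfrac12\bz$; a value near $0$ can stay near $\tfrac12\bz$ only via digit $0$, a value near $1$ only via digit $3$, and one cannot pass between the near-$0$ and near-$1$ regimes without landing a fixed distance from $\tfrac12\bz$. Hence $\mathrm{dist}(t_j,\tfrac12\bz)\to0$ forces $d_j$ to be eventually all $0$ or all $3$, i.e. $s$ ends in $\underline0$ or $\underline3$. Since $s\in\mathcal S$ does not (condition (i)), there is $c>0$ and infinitely many $j$ with $\mathrm{dist}(t_j,\tfrac12\bz)\ge c$, so $\sin^2(2\pi(x/4^j+t_j))$ is bounded below along this subsequence, the series diverges, and $P_x^n(v_n)\to0$ as required. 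The main obstacle is precisely this last analysis of the digit recursion; everything else is the bookkeeping of the level-$n$ decomposition together with the finiteness of $\mathcal S$.
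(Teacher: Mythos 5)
Your proposal is correct and follows essentially the same route as the paper's proof: the same level-$n$ factorization $h(x)=\sum_v P_x^n(v)\,h_{\Lambda_v}((x+v)/4^n)$ with \eqref{eqsp4}, the same good/bad split using the finiteness of $\mathcal S$, and the same key vanishing claim for the product along a path in $\mathcal S$, proved by the identical digit-recursion analysis (your $t_j$ is the paper's $y_j$ modulo $\bz$, and your case analysis near $0$, $\tfrac12$, $1$ matches the paper's). The only cosmetic difference is that you phrase the vanishing of the product via divergence of $\sum\sin^2$, while the paper extracts a subsequence with $m\le\delta<1$ and bounds the product by $\delta^p$.
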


\begin{proof}
Let $m(x):=\cos^2(2\pi x)$, $x\in\br$.

Fix $x\in\br$ and let $\omega_0\omega_1\dots$ be a path in $\mathcal T(\mathcal L)$ that does not end in $\underline0$ or $\underline3$. We prove that 
\begin{equation}\label{eqr2}
\lim_{n\rightarrow\infty}\prod_{j=1}^nm\left(\frac{x+\omega_0+\dots+4^{n-1}\omega_{n-1}}{4^j}\right)=0.
\end{equation}
To prove \eqref{eqr2}, we will show first that there exists $\epsilon_0>0$ and a subsequence $\{n_p\}_{p\geq0}$ such that 
\begin{equation}\label{eqr3}
\mbox{dist}\left(\frac{x+\omega_0+\dots+4^{n_p-1}\omega_{n_p-1}}{4^{n_p}},\{0,\frac12,1\}\right)\geq\epsilon_0,\quad(p\geq0).
\end{equation}
If not, then 
$$\mbox{dist}\left(\frac{x+\omega_0+\dots+4^{n-1}\omega_{n-1}}{4^n},\{0,\frac12,1\}\right)\rightarrow0,\mbox{ as }n\rightarrow\infty.$$
Take $\epsilon>0$ small $\epsilon<\frac{1}{4^{10}}$. For $n$ large, $y_n:=\frac{x+\omega_0+\dots+4^{n-1}\omega_{n-1}}{4^n}$ is close to 0, $\frac12$ or $1$.

If $|y_n-0|<\epsilon$ then $y_{n+1}=\frac{y_n+\omega_n}4$ is close to either $0$ when $\omega_n=0$, or $\frac{1}{4},\frac{2}{4},\frac{3}{4}$ when $\omega_n=1,2$ or $3$.

If $|y_n-\frac12|<\epsilon$ then $y_{n+1}$ is close to either $\frac18$, $\frac38$, $\frac58$ or $\frac78$, so it cannot be close to $\{0,\frac12,1\}$. 

If $|y_n-1|<\epsilon$ then $y_{n+1}$ is close to $\{0,\frac12,1\}$ only when $\omega_n=3$.

Thus, the only paths that will make $y_n$ stay close to $\{0,\frac12,1\}$, as $n\rightarrow\infty$, are the ones that end in $\underline0$ or $\underline3$.
This proves \eqref{eqr3}.

If \eqref{eqr3} is satisfied then, since $m(y)=1$ only at $0,\frac12$ and $1$, for $y\in(-1/4,5/4)$, there exists some $\delta>0$, with $\delta<1$, such that for all $p\geq0$,
\begin{equation}\label{eqr4}
m\left(\frac{x+\omega_0+\dots+4^{n_p-1}\omega_{n_p-1}}{4^{n_p}}\right)\leq\delta.
\end{equation}
Then for $n\geq n_p$ we have, since $0\leq m\leq 1$ and $m$ is $\bz$-periodic,
$$\prod_{k=1}^nm\left(\frac{x+\omega_0+\dots+4^{n-1}\omega_{n-1}}{4^j}\right)\leq\prod_{l=1}^pm\left(\frac{x+\omega_0+\dots+4^{n_l-1}\omega_{n_l-1}}{4^{n_l}}\right)\leq\delta^p.$$

This implies \eqref{eqr2}.

Let $\mathcal V(\mathcal S)$ be the set of labels of vertices on the paths in $\mathcal S$.

To prove Proposition \ref{propr2}, we use Lemma \ref{lemsp3}. Using the computation in the proof of Proposition \ref{propr1} we have for all $n\geq0$:
$$\sum_{\lambda\in\Lambda(\mathcal L)}|\muf(x+\lambda)|^2=\sum_{d_0\dots d_{n-1}\in\Lambda(\mathcal L)}\prod_{j=1}^nm\left(\frac{x+d_0+\dots+4^{n-1}d_{n-1}}{4^j}\right)\times$$$$\sum_{\lambda\in\Lambda(\mathcal L_{d_0\dots d_{n-1}})}\left|\muf\left(\frac{x+d_0+\dots+4^{n-1}d_{n-1}}{4^n}+\lambda\right)\right|^2\geq $$
$$\sum_{d_0\dots d_{n-1}\in\Lambda(\mathcal L)\setminus\mathcal V(\mathcal S)}\prod_{j=1}^nm\left(\frac{x+d_0+\dots+4^{n-1}d_{n-1}}{4^j}\right)\sum_{\lambda\in\Lambda(\mathcal L_{d_0\dots d_{n-1}})}\left|\muf\left(\frac{x+d_0+\dots+4^{n-1}d_{n-1}}{4^n}+\lambda\right)\right|^2=(\ast).$$
Since $\Lambda(\mathcal L_{d_0\dots d_{n-1}})$ is a spectrum for all $d_0\dots d_{n-1}$ not in $\mathcal V(\mathcal S)$, with Lemma \ref{lemsp3} we obtain
$$(\ast)=\sum_{\stackrel{d_0\dots d_{n-1}}{\in\Lambda(\mathcal L)\setminus\mathcal V(\mathcal S)}}\prod_{j=1}^nm\left(\frac{x+d_0+\dots+4^{n-1}d_{n-1}}{4^j}\right)=1-\sum_{\stackrel{d_0\dots d_{n-1}}{\in\Lambda(\mathcal L)\cap\mathcal V(\mathcal S)}}\prod_{j=1}^nm\left(\frac{x+d_0+\dots+4^{n-1}d_{n-1}}{4^j}\right)=(\ast\ast).$$
We used \eqref{eqsp4} for the previous equality.

We use the notation $\omega=d_0(\omega)d_1(\omega)\dots$. We have then with \eqref{eqr2},

$$(\ast\ast)=1-\sum_{\omega\in\mathcal S}\prod_{j=1}^nm\left(\frac{x+d_0(\omega)+\dots+4^{n-1}d_{n-1}(\omega)}{4^j}\right)\rightarrow1.$$
\end{proof}

\begin{example}\label{exr4}
We construct an example of a spectral labeling $\mathcal L$ such that $\Lambda(\mathcal L)$ is a spectrum for $\mu_4$ but $\mathcal L$ does not satisfy the conditions of Theorem \ref{thsp2}. 
\begin{figure}[ht]\label{fig2}
\centerline{ \vbox{\hbox{\epsfxsize 5cm\epsfbox{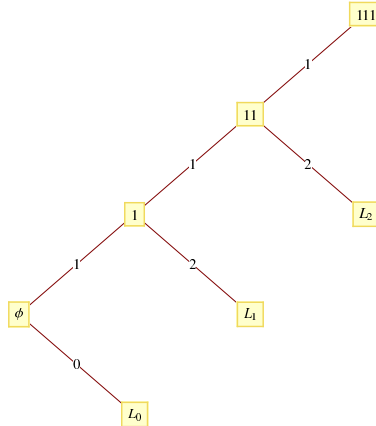}}}}
\caption{A spectral labeling which gives a spectrum but does not satisfy the conditions of Theorem \ref{thsp2}.}
\end{figure}

For this pick an infinite path in the binary tree and label it with $111\dots$.

Let $\mathcal L_0$ be the spectral labeling which uses $\{0,1\}$ at each branch. We know $\Lambda(\mathcal L_0)$ is a spectrum. 
Let $\mathcal L_n$ be the spectral labeling which uses $\{1,2\}$ for first $n$ levels in the tree and $\{0,1\}$ for the rest. Using Proposition \ref{propr1}, we have that $\Lambda(\mathcal L_n)$ is a spectrum. 

We label the edges in the binary tree as follows. At the root, we already have one label $1$. We use $0$ for the other edge, and we label the subtree with root $0$ using $\mathcal L_0$. At the vertex $\underbrace{1\dots1}_{n\mbox{ times}}$, we already have one label $1$. We use $2$ for the other edge, and we label the subtree with root $\underbrace{1\dots1}_{n\mbox{ times}}$ using $\mathcal L_n$. 

Doing this for all $n$, we get a spectral labeling $\mathcal L$. Proposition \ref{propr2} shows that $\Lambda(\mathcal L)$ is a spectrum for $\mu_4$. 

Clearly $\mathcal L$ does not satisfy the conditions of Theorem \ref{thsp2}, because for any $P\geq0$, if we take the vertex $\underbrace{1\dots1}_{P+1\mbox{ times}}$, any path from this vertex has to go through a barrage of at least $P+1$ twos, before it can end in $\underline 0$.

\end{example}

\begin{acknowledgements}
We would like to thank professors Palle Jorgensen, Keri Kornelson, Judith Packer, Gabriel Picioroaga, Yang Wang and Eric Weber for helpful discussions and suggestions.
\end{acknowledgements}
\bibliographystyle{alpha}
\bibliography{stspec}

\end{document}